\documentclass[12pt]{amsart}

\textwidth=15.3cm \textheight=24.2cm \setlength{\topmargin}{-.5cm}
\setlength{\evensidemargin}{-.cm} \setlength{\oddsidemargin}{-.cm}
\raggedbottom
\parskip=1.5mm

\usepackage{amsmath,amsfonts,amsthm}

\renewcommand{\theequation}{\thesection.\arabic{equation}}
\newtheorem{theorem}{Theorem}
\newtheorem{lemma}{Lemma}
\newtheorem{proposition}{Proposition}

\newtheorem{remark}{Remark}
\newtheorem{definition}{Definition}

%%%%%%%%%%%%%%%%%%%%%%%%%%% Equation numberings
\newcommand{\eqnsection}{
\renewcommand{\theequation}{\thesection.\arabic{equation}}
    \makeatletter
    \csname  @addtoreset\endcsname{equation}{section}
    \makeatother}
\eqnsection
%%%%%%%%%%%%%%%%%%%%%%%%%%%

%%%%%%%%%%%%%% Bbb characters
%%%%%%%%%%%%%% Real numbers
\def\r{{\mathbb R}}
%%%%%%%%%%%%%% Expectation
\def\e{{\mathbb E}}
%%%%%%%%%%%%%% Probability
\def\p{{\mathbb P}}
%%%%%%%%%%%%%% Integers
\def\z{{\mathbb Z}}
%%%%%%%%%%%%%% Positive integers
\def\n{{\mathbb N}}
%%%%%%%%%%%%%% Favourite set

%%%%%%%%%%%%%%

%%%%%%%%%%%%%%%% Special symbols
%%%%%%%%%%%%%% Exponential
\def\ee{\mathrm{e}}
%%%%%%%%%%%%%% Differentiation
\def\d{\, \mathrm{d}}
%%%%%%%%%%%%%%
\def\l{\ell}

%%%%%%%%%%%%%%%% Def de Nathanael

\def\Z{{\mathbb Z}}

\def\N{{\mathbb N}}
\def\E{{\mathbb E}}

\def\indic{{\bf 1}}

%%%%%%%%%%%%%%%%%%%%%%%%%%%%%%%%%%%%%%%
%%%%%
%%%%%%%%%%%%%%%%%%%%%%%%%%%%%%%%%%%%%%%%%%%%
%%%%%%%%%%%%%%%%%%%%%%%%%%%%%%%%%%%%%%%%%%%%

\author[O. Zindy]{Olivier ZINDY}
\address{Weierstrass Institute for Applied Analysis and Stochastics,
Mohrenstrasse 39, 10117 Berlin, Germany}
\email{zindy@wias-berlin.de}

\keywords{directed trap model, random walk, scaling limit, subordinator, aging} \subjclass[2000]{primary 60K37, 60G52, 60F17;
secondary 82D30}

\title[Directed trap models]{Scaling limit and aging for directed trap models}

\begin{document}

\maketitle
\bigskip

{\footnotesize \noindent{\slshape\bfseries Abstract.} }
 We consider one-dimensional directed trap models and suppose that the trapping times are heavy-tailed.
 We obtain the inverse of a stable subordinator as scaling limit and prove an aging phenomenon expressed in terms of the generalized arcsine law. These results confirm the universality of this phenomenon described by Ben Arous and \v Cern\'y for a large class of graphs.
 \bigskip
\bigskip

\section{Introduction}

What is usually called {\it aging} is a dynamical out-of-equilibrium physical phenomenon observed in disordered systems like spin-glasses at low temperature. It is defined by the existence of a limit of a given two-time (usually denoted by $t_\omega$ and $t_\omega+t$) correlation function of the system as both times diverge keeping a fixed ratio between them. The limit should be a non-trivial function of the ratio. It has been extensively studied in the physics literature, see \cite{bouchaud-cugliandolo-kurchan-mezard} and therein references.

The {\it trap model} is a model of random walk that was first  proposed by Bouchaud and Dean \cite{bouchaud, bouchaud-dean} as a toy model for studying this aging phenomenon.
In the mathematics litterature, much attention has recently been given to the trap model, and many aging result were derived from it. The trap model on $\z$ is treated in \cite{fontes-isopi-newman} and \cite{benarous-cerny05},  on $\z^2$ in \cite{benarous-cerny-mountford}, on $\z^d$ $(d \ge 3)$ in \cite{benarous-cerny07b} and on the hypercube in  \cite{benarous-bovier-gayrard03a,benarous-bovier-gayrard03b}. A comprehensive approach to obtaining aging results for the trap model in various settings was later developed in \cite{benarous-cerny07a}. The striking fact is that these aging results are identical for $\z^d,$ $d\ge 2$ and the large complete graph, or the REM. In other terms, the mean-field results are valid from infinite dimension down to dimension $2.$

The one-dimensional trap model has some specific features that distinguish it from all other cases. The most useful feature is that we can identify its scaling limit as an interesting one-dimensional singular diffusion in random environment, see \cite{fontes-isopi-newman}. This process differs considerably from the scaling limit for $d\ge2,$ namely the {\it fractional kinetics} process, i.e. the time change of a $d$-dimensional Brownian motion by the inverse of an independent $\alpha$-stable subordinator, see \cite{benarous-cerny07b}.  In fact, the universality of the aging phenomenon is a question about the transient part of relaxation to equilibrium and not necessarily related to equilibrium questions.

Here, we give an answer to a question of Ben Arous and \v Cern\'y  \cite{benarous-cerny06a} by studying the influence of a drift in the one-dimensional trap model.
We identify the scaling limit of the so-called {\it directed trap model} with the inverse of an $\alpha$-stable subordinator and prove an aging result  expressed  in terms of the generalized arcsine law. These results confirm the universality of the phenomenon described by Ben Arous and \v Cern\'y \cite{benarous-cerny07a}. Furthermore, this extends some results of Monthus \cite{monthus}, who studies the influence of a bias in the high disorder limit (i.e. when $\alpha$ tends to zero with our notations, see (\ref{ass:stable})) using renormalization arguments. Note that the ideas of the proof developed in this paper are deduced from a strong comparison with one-dimensional random walks in random environment in the sub-ballistic regime. Indeed, analogous results are obtained for this asymptotically equivalent model in \cite{enriquez-sabot-zindy-1} (using \cite{enriquez-sabot-zindy-2}) and \cite{enriquez-sabot-zindy-3}.

The rest of the paper is organized as follows. The main results
are stated in Section \ref{s:result}. In Section \ref{s:prelim},
we present some elementary results about the environment, the
embedded random walk as well as preliminary estimates, which will
be frequently used throughout the paper. Section \ref{s:scaling}
and Section \ref{s:aging} are respectively devoted to the proof of
the scaling limit and to the proof of the aging result.

\section{Notations and main results} \label{s:result}

  Let us first fix $0<\varepsilon \le1/2.$ Then, the {\it directed trap model} is the nearest-neighbour continuous-time Markov process $X=(X_t)_{t \ge 0} $ with state space $\z,$ given by $X_0=0$ and with jump rates
 \begin{eqnarray}
c(x,y) :=\left\{\begin{array}{lll} \left({1 \over 2} + \varepsilon\right) \tau_x^{-1} & {\rm if}
\ y=x+1,
\\
 \left({1 \over 2}- \varepsilon\right)  \tau_x^{-1}  &  {\rm if} \ y=x-1,

\end{array}
\right.
 \end{eqnarray}
and zero otherwise, where ${\tau}=(\tau_x)_{x \in \z}$ is a family of positive i.i.d. heavy-tailed random variables. More precisely, we suppose that there exists $\alpha \in (0,1)$ such that
 \begin{eqnarray}
 \label{ass:stable}
  \lim_{u \to \infty} u^{\alpha} \,  \p (\tau_x \ge u)=1.
 \end{eqnarray}
In particular, this implies $\e \left[\tau_x\right]=+\infty.$ Sometimes $\tau$ is called random environment of traps. The Markov process $X_t$ spends at site $x$ an exponentially distributed time of mean $\tau_x$, and then jumps to the right with probability $p_\varepsilon:=({1 \over 2} + \varepsilon)$ and to the left with probability $q_\varepsilon:=({1 \over 2} - \varepsilon).$ Therefore, $X$ is a time change of a  discrete-time biased random walk on $\z.$ More precisely, we define the {\it clock process} and the {\it embedded random walk} associated with $X$ as follows.
\begin{definition}
\label{d:clock+embedded}
Let $S(0):=0$ and let $S(k)$ be the time of the $k$-th jump of $X,$ for $k \in \n^*.$ For $s \in \r_+,$ we define $S(s):=S(\lfloor s \rfloor)$ and call $S$ the {\it clock process.} Define the embedded discrete-time random walk $(Y_n)_{n \ge 0}$ by $Y_n:=X_t$ for $S(n)\le t < S(n+1).$ Then obviously, $(Y_n)_{n \ge 0}$ is a biased random walk on $\z.$
\end{definition}

Observe that $(Y_n)_{n \ge 0}$ satisfies
$
 \p(Y_{n+1}=Y_n+1)={1 \over 2} + \varepsilon=1- \p(Y_{n+1}=Y_n-1),
$
for all $n \ge 0.$ Therefore, $(Y_n)_{n \ge 0}$ is transient to $+ \infty$ and the law of large numbers implies that, $\p$-almost surely,
 \begin{eqnarray}
 \label{eq:LLN}
{Y_n \over n}  \longrightarrow v_\varepsilon:=2\varepsilon>0, \qquad n \to \infty.
 \end{eqnarray}
Furthermore, it follows from the definition of $X$ that the clock process can be written
 \begin{eqnarray}
 \label{def:clock}
 S(k)=\sum_{i=0}^{k-1} \tau_{Y_i} {\bf e}_i, \qquad k \ge 1,
 \end{eqnarray}
where $({\bf e}_i)_{i\ge0}$ is a family of i.i.d. mean-one exponentially distributed random variables. We always suppose that the ${\bf e}_i$'s are defined in this way. Then, the process $(X_t)_{t \ge 0}$ satisfies
 \begin{eqnarray}
 \label{def:X}
 X_t=Y_{S^{-1}(t)}, \qquad \forall \,  t \in \r_+,
 \end{eqnarray}
where the right-continuous inverse of an increasing function $\phi$ is defined by
$
\phi^{-1}(t):= \inf \{u \ge 0: \, \phi(u)> t\}.
$

 Now, let us fix $T>0$ and denote by $D(\left[0,T\right])$ the space of c\`adl\`ag functions from $\left[0,T\right]$ to $\r.$ Moreover, let $X^{(N)}$ be the sequence of elements of $D(\left[0,T\right])$ defined by
 \begin{equation}
X^{(N)}_t:= {X_{t N} \over N^{\alpha}}, \qquad 0 \le t \le T.
\end{equation}
Then, the scaling limit result can be stated as follows.
\begin{theorem}
\label{t:scaling}  The distribution of the process $(X^{(N)}_t; \, 0 \le t \le T)$ converges weakly to the distribution of $(v^{\#}_\varepsilon \, V_\alpha^{-1}(t); \, 0 \le t \le T)$ on $D(\left[0,T\right])$ equipped with the uniform topology, where $(V_\alpha(t); \, t \ge 0)$ is an $\alpha$-stable subordinator satisfying $\e[\ee^{-\lambda V_\alpha(t)}] =\ee^{-t \lambda^\alpha},$ and $v^{\#}_\varepsilon:={   \sin( \alpha\pi)  \over\alpha \pi  } v_\varepsilon^{\alpha}  ={   \sin( \alpha\pi)  \over\alpha \pi  } (2\varepsilon)^{\alpha} .$
\end{theorem}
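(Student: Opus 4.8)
The plan is to transfer the whole problem onto the clock process via the identity $X_t=Y_{S^{-1}(t)}$ of \eqref{def:X}. Since $(Y_n)$ is ballistic with speed $v_\varepsilon$ by \eqref{eq:LLN}, one has $\sup_{m\le MN^\alpha}|Y_m-v_\varepsilon m|=o(N^\alpha)$ almost surely for every fixed $M$, so that on the event $\{S^{-1}(TN)\le MN^\alpha\}$
\[
X^{(N)}_t=\frac{Y_{S^{-1}(tN)}}{N^\alpha}=v_\varepsilon\,\frac{S^{-1}(tN)}{N^\alpha}+R_N(t),\qquad 0\le t\le T,
\]
with $\sup_{0\le t\le T}|R_N(t)|\le N^{-\alpha}\sup_{m\le MN^\alpha}|Y_m-v_\varepsilon m|\to0$. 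Since $\{S^{-1}(TN)>MN^\alpha\}=\{S(\lfloor MN^\alpha\rfloor)\le TN\}$ will be seen below to have probability tending to $0$ as $M\to\infty$, uniformly in $N$, it is enough to show that $t\mapsto S^{-1}(tN)/N^\alpha$ converges weakly on $D([0,T])$, equipped with the uniform topology, to $(v^{\#}_\varepsilon/v_\varepsilon)\,V_\alpha^{-1}(t)$.

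The core of the argument is to prove that the rescaled clock $\widetilde{S}^{(N)}(u):=S(\lfloor N^\alpha u\rfloor)/N$ converges weakly, in $D([0,\infty))$ with the $J_1$-topology, to an $\alpha$-stable subordinator $\sigma$. Grouping the sum in \eqref{def:clock} according to the visited site,
\[
S(k)=\sum_{x\in\z}\tau_x\,L_x(k),\qquad L_x(k):=\sum_{0\le i<k,\,Y_i=x}{\bf e}_i,
\]
the crucial point being that the environment $\tau=(\tau_x)$ is independent of $\big((Y_n),({\bf e}_i)\big)$. For a biased walk transient to $+\infty$, a bulk site $x$ is visited a geometric number of times with parameter $v_\varepsilon=2\varepsilon$, so the weight $L_x(\infty)$, a geometric sum of independent mean-one exponentials, is itself exponentially distributed with parameter $v_\varepsilon$; moreover, up to step $N^\alpha u$ the walk has completed its visits to all but a tight (i.e.\ $O(1)$) number of the $\approx v_\varepsilon N^\alpha u$ explored sites, the exceptions lying near the origin or near the front $Y_{N^\alpha u}$. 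Since $\p(\tau_0 L\ge v\mid L)\sim L^\alpha v^{-\alpha}$ by \eqref{ass:stable}, each product $\tau_x L_x$ is regularly varying of index $\alpha$ with tail constant $\e[L^\alpha]=v_\varepsilon^{-\alpha}\Gamma(1+\alpha)$, and the classical limit theory for sums of i.i.d.\ heavy-tailed variables --- convergence of the point process of rescaled jumps to a Poisson point process on $[0,\infty)\times(0,\infty)$ --- then yields $\widetilde{S}^{(N)}\Rightarrow\sigma$, where $\e[{\ee}^{-\lambda\sigma_u}]={\ee}^{-u\,C_\varepsilon\lambda^\alpha}$ and
\[
C_\varepsilon=v_\varepsilon\,\Gamma(1-\alpha)\,\e[L^\alpha]=v_\varepsilon^{\,1-\alpha}\,\Gamma(1+\alpha)\Gamma(1-\alpha)=v_\varepsilon^{\,1-\alpha}\,\frac{\alpha\pi}{\sin(\alpha\pi)},
\]
the last equality being Euler's reflection formula; the prefactor $v_\varepsilon$ records that $N^\alpha u$ steps explore $\approx v_\varepsilon N^\alpha u$ sites, with matching norming sequence $(v_\varepsilon N^\alpha u)^{1/\alpha}=v_\varepsilon^{1/\alpha}N u^{1/\alpha}$. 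The same estimate shows that $\p(S(\lfloor MN^\alpha\rfloor)\le TN)=\p(\widetilde{S}^{(N)}(M)\le T)$ can be made small, uniformly in $N$, by taking $M$ large, which justifies the claim used above.

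It then remains to invert. Since $\alpha\in(0,1)$, $\sigma$ has zero drift and infinite Lévy measure, hence is almost surely strictly increasing, so $\sigma^{-1}$ is almost surely continuous; as the right-continuous inversion map is continuous at strictly increasing paths, the previous step gives $(\widetilde{S}^{(N)})^{-1}\Rightarrow\sigma^{-1}$, and one checks directly that $S^{-1}(tN)/N^\alpha=(\widetilde{S}^{(N)})^{-1}(t)$ up to a negligible floor correction. Because $\sigma$ has the law of $(C_\varepsilon^{1/\alpha}V_\alpha(u))_{u\ge0}$, the self-similarity of $V_\alpha$ gives $\sigma^{-1}(t)\equilaw C_\varepsilon^{-1}V_\alpha^{-1}(t)$. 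Feeding this into the first paragraph, and noting that the limit $v_\varepsilon\sigma^{-1}=v^{\#}_\varepsilon V_\alpha^{-1}$ is continuous, so that the $J_1$-convergence automatically upgrades to convergence in the uniform topology on $[0,T]$, we obtain $X^{(N)}\Rightarrow v^{\#}_\varepsilon V_\alpha^{-1}$ with
\[
v^{\#}_\varepsilon=\frac{v_\varepsilon}{C_\varepsilon}=\frac{\sin(\alpha\pi)}{\alpha\pi}\,v_\varepsilon^{\alpha}=\frac{\sin(\alpha\pi)}{\alpha\pi}\,(2\varepsilon)^{\alpha},
\]
exactly the constant asserted in the theorem.

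The hard part is the functional convergence $\widetilde{S}^{(N)}\Rightarrow\sigma$, precisely because the summands $\tau_{Y_i}{\bf e}_i$ are not independent. It requires a careful analysis of the local-time field $(L_x(k))_x$ of the transient biased walk: a Ray--Knight-type description showing that, in the bulk, it is a stationary and sufficiently fast mixing chain in the spatial variable; control of the $O(1)$ atypical sites near the origin and near the front, whose joint contribution, being a bounded number of heavy-tailed terms, is $o_{\p}(N)$ and hence negligible after division by $N$; and the fact that, conditionally on a site carrying an atypically large trap, the weight there behaves like an independent exponential of parameter $v_\varepsilon$, so that the rescaled jumps converge to the Poisson point process producing $\sigma$. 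Together with the tightness needed to promote finite-dimensional convergence to a functional statement, these are exactly the points where a strong comparison with the sub-ballistic one-dimensional random walk in random environment, analysed in \cite{enriquez-sabot-zindy-1,enriquez-sabot-zindy-2,enriquez-sabot-zindy-3}, is exploited; the remaining manipulations (continuity of inversion, passage to the uniform topology) are routine.
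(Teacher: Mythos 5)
Your reduction is sound as far as it goes: replacing $X_t=Y_{S^{-1}(t)}$ by $v_\varepsilon S^{-1}(tN)/N^\alpha$ via the law of large numbers, inverting through Whitt's continuity of the inverse map at strictly increasing paths, upgrading to the uniform topology because the limit is continuous, and the bookkeeping of constants ($L_x(\infty)\sim$ Exp$(v_\varepsilon)$, tail constant $\e[L^\alpha]=v_\varepsilon^{-\alpha}\Gamma(1+\alpha)$, $\approx v_\varepsilon N^\alpha u$ visited sites, Euler reflection giving $v_\varepsilon/C_\varepsilon=v^{\#}_\varepsilon$) all check out and are consistent with the paper. In fact your scheme is essentially the paper's own argument transposed: the paper inverts the space-indexed hitting-time process $H^{(N)}$ (Proposition \ref{p:scaling limit}) and controls $|X-\overline X|$ by the no-backtracking event $\mathcal{A}(n)$, whereas you invert the step-indexed clock process and absorb the position/step discrepancy into the LLN; these are equivalent reductions of comparable difficulty.

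The genuine gap is the core step, the functional convergence $\widetilde S^{(N)}\Rightarrow\sigma$: you derive it from ``the classical limit theory for sums of i.i.d.\ heavy-tailed variables,'' but the summands $\tau_x L_x(k)$ are not i.i.d.\ --- the local times $L_x(k)$ at different sites are correlated, are incomplete near the front and possibly near the origin, the number of visited sites is random and correlated with the local-time field, and one must also rule out that a single site is counted as several partial contributions spread over macroscopically separated rescaled times (this is exactly why the paper only gets $M_1$, not $J_1$, for the hitting-time process). Your closing paragraph concedes that this is ``the hard part'' and sketches a Ray--Knight/mixing program plus a comparison with \cite{enriquez-sabot-zindy-1,enriquez-sabot-zindy-3}, but none of it is carried out, and it is precisely the technical content of the paper's proof: negligibility of shallow traps (Lemma \ref{l:IA}), separation of deep traps and no backtracking (Lemmas \ref{l:preliminaries}, \ref{l:*}, \ref{f:preliminaries-traj}), the Laplace-transform asymptotics for the occupation time of a single deep trap (Lemma \ref{l:laplacetransform}), the iterated strong-Markov/reflection argument turning the deep-trap contributions into genuinely i.i.d.\ blocks, and the $M_1$-tightness borrowed from \cite{benarous-bovier-cerny}. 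Without a quantitative decoupling of this kind (or a worked-out mixing/point-process argument with control of the boundary terms uniformly in the time parameter), the assertion $\widetilde S^{(N)}\Rightarrow\sigma$ --- and hence the theorem --- is not proved; everything downstream of it in your write-up is correct but conditional on this missing piece.
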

Although this result can be compared with the limit in \cite{benarous-cerny07b}, we do not obtain the {\it fractional kinetics} process. This difference can be explained by recalling that the fractional kinetics process is the time change of a Brownian motion by the inverse of an independent $\alpha$-stable subordinator while our embedded random walk satisfies the law of large numbers with positive speed, see (\ref{eq:LLN}).
 Furthermore, observe that the case $\varepsilon= 1/2$ is trivial; indeed $Y$ is deterministic, $v_\varepsilon =1$ and the clock process, which can be written $S(k)=\sum_{i=0}^{k-1} \tau_{i} {\bf e}_i,$ is just a sum of i.i.d. heavy-tailed random variables. Now let us state the second main result concerning the aging phenomenon.
\begin{theorem}
\label{t:aging}
For all $h>1,$ we have
\begin{equation}
\lim_{t\to\infty} \p(X_{th}=X_t)= {\sin(\alpha\pi)\over \pi} \int_{0}^{1/h} y^{\alpha-1} (1-y)^{-\alpha}\d y.
\end{equation}
\end{theorem}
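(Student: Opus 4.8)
The plan is to prove that $\{X_{th}=X_t\}$ coincides, up to an event of probability $o(1)$, with the event that $t$ and $th$ lie in the same ``deep trapping block'' of the walk, and then to evaluate the probability of that event by the generalized arcsine (Dynkin--Lamperti) law for heavy-tailed renewal processes. To make the first step precise I would use \emph{regeneration levels}: integers $k$ that $X$ reaches and never strictly goes below afterwards. Since $Y$ is transient with positive speed $v_\varepsilon=2\varepsilon$ (see (\ref{eq:LLN})), there are infinitely many such levels $k_0<k_1<\cdots$, occurring with positive density, and setting $\mathcal H(n):=\inf\{t\ge0:X_t=k_n\}$ the increments $\mathcal T_n:=\mathcal H(n)-\mathcal H(n-1)$ are independent, with $(\mathcal T_n)_{n\ge2}$ identically distributed. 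Each $\mathcal T_n$ is a sum of a geometrically bounded number of terms of the form $\tau_x\cdot(\text{mean-one exponential factor})$ and, in the tail, is governed by the total time spent at the deepest trap of the $n$-th block; by (\ref{ass:stable}) the law of $\mathcal T_n$ is therefore regularly varying of index $\alpha$, which is precisely the estimate underlying the $\alpha$-stable limit in Theorem~\ref{t:scaling}.

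Let $\mathcal N(s):=\sup\{n\ge0:\mathcal H(n)\le s\}$, so that $s$ belongs to block number $\mathcal N(s)$, namely $[\mathcal H(\mathcal N(s)),\mathcal H(\mathcal N(s)+1))$. Because the walk never goes below a regeneration level, $k_n\le X_s<k_{n+1}$ throughout block $n$, and since these intervals are disjoint, $X_{th}=X_t$ forces $\mathcal N(t)=\mathcal N(th)$, i.e.\ $\mathcal H(\mathcal N(th))\le t$. For the converse, on $\{\mathcal N(t)=\mathcal N(th)\}$ the common block has length $\mathcal T_{\mathcal N(t)+1}>(h-1)t$, so by the single big jump principle for heavy-tailed sums it contains, with probability tending to one, exactly one trap $x^\ast$ of depth of order $t$, and the walk spends all but $o(t)$ of that block's time at $x^\ast$. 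Since $t-\mathcal H(\mathcal N(t))$, the position of the fixed time $t$ inside its block, converges after division by $t$ to a diffuse law, it follows that $X_t=X_{th}=x^\ast$ with probability $\to1$ conditionally on $\{\mathcal N(t)=\mathcal N(th)\}$. Combining the two bounds,
\begin{equation*}
\p(X_{th}=X_t)=\p\bigl(\mathcal H(\mathcal N(th))\le t\bigr)+o(1),\qquad t\to\infty.
\end{equation*}

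It remains to compute the limit of $\p(\mathcal H(\mathcal N(th))\le t)$. As $(\mathcal T_n)_{n\ge2}$ are i.i.d.\ with a regularly varying tail of index $\alpha\in(0,1)$, hence infinite mean, the Dynkin--Lamperti theorem applies to the delayed renewal process $\mathcal H$: as $s\to\infty$, $\mathcal H(\mathcal N(s))/s$ converges in distribution to a variable $A$ with the generalized arcsine density $\frac{\sin(\alpha\pi)}{\pi}\,y^{\alpha-1}(1-y)^{-\alpha}$ on $(0,1)$. Applying this with $s=th$ gives $\mathcal H(\mathcal N(th))/t=h\cdot\mathcal H(\mathcal N(th))/(th)\Rightarrow hA$, whence
\begin{equation*}
\p\bigl(\mathcal H(\mathcal N(th))\le t\bigr)\longrightarrow\p(A\le 1/h)=\frac{\sin(\alpha\pi)}{\pi}\int_0^{1/h}y^{\alpha-1}(1-y)^{-\alpha}\d y,
\end{equation*}
which is the asserted limit; note that the constant $v^\#_\varepsilon$ of Theorem~\ref{t:scaling} has disappeared, as it must, the left-hand side depending only on the ratio $h$. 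One may also bypass the regeneration apparatus: by Theorem~\ref{t:scaling} and the continuous mapping theorem, $\lim_t\p(|X_{th}-X_t|\le\delta t^\alpha)=\p\bigl(v^\#_\varepsilon|V_\alpha^{-1}(1)-V_\alpha^{-1}(h)|\le\delta\bigr)$ for a.e.\ $\delta>0$, which tends as $\delta\downarrow0$ to $\p(V_\alpha^{-1}(1)=V_\alpha^{-1}(h))$ since $V_\alpha^{-1}$ is continuous; if one also shows $\lim_{\delta\downarrow0}\limsup_t\p(0<|X_{th}-X_t|\le\delta t^\alpha)=0$, this identifies $\lim_t\p(X_{th}=X_t)$ with $\p(V_\alpha^{-1}(1)=V_\alpha^{-1}(h))$, equal to the same integral by the exact arcsine law for the undershoot of $V_\alpha$.

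I expect the main obstacle to be the reverse inclusion, i.e.\ bounding $\p\bigl(\mathcal N(t)=\mathcal N(th),\ X_t\neq X_{th}\bigr)$ (equivalently, in the second route, $\p(0<|X_{th}-X_t|\le\delta t^\alpha)$). One must rule out, with asymptotic probability one, two obstructions: that a single regeneration block contains two traps of comparable (of order $t$) depth, and that one of the fixed times $t$, $th$ falls during one of the finitely many short excursions the walk makes away from the dominant trap of its block. The first is an environment estimate (the expected number of such blocks among the $\asymp t^\alpha$ relevant ones is of order $t^{-\alpha}\to0$); the second combines a single big jump bound for $\mathcal T_n$ with the fact — extracted from the renewal structure via Dynkin--Lamperti — that the block straddling the large time $t$ has length of order exactly $t$, so that those excursions occupy only an $o(1)$ fraction of any fixed window around $t$ or $th$.
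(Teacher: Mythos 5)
Your route (regeneration levels, i.i.d.\ block durations $\mathcal T_n$ with regularly varying tails, Dynkin--Lamperti for the renewal process $\mathcal H$) is a genuinely different packaging from the paper, which instead builds the renewal structure directly from the hitting times of the $*$-deep traps, computes the Laplace transform of the time spent in a deep trap (Lemma \ref{l:laplacetransform}) and then proves the arcsine law for those times (Propositions \ref{p:dynkin} and \ref{p:dynkin2}). The forward inclusion $\{X_{th}=X_t\}\subset\{\mathcal N(t)=\mathcal N(th)\}$ is indeed clean in your setup, and the tail claim for $\mathcal T_n$, though asserted rather than proved, is the fixable part: it amounts to showing the block time is dominated by the deepest trap of the block (the analogue of Lemma \ref{l:IA}), and Dynkin--Lamperti only needs regular variation, not the constant.

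The genuine gap is the converse inclusion, i.e.\ localization at the two \emph{deterministic} times $t$ and $th$. Your argument is: the block straddling $t$ has a single dominant trap $x^\ast$, the walk spends all but $o(t)$ of the block there, and the age $t-\mathcal H(\mathcal N(t))$ has a diffuse limit, ``hence'' $X_t=x^\ast$ with probability $\to1$. This does not follow: the random set of times spent away from $x^\ast$ is correlated with the age, and the statement that excursions occupy an $o(1)$ fraction of a window around $t$ does not prevent an excursion from covering the specific instant $t$ with non-vanishing probability (conditionally on the past, the walk could be mid-excursion at time $t$ with probability of the order of the ratio of a typical excursion length to a typical sojourn at $x^\ast$, and this must be estimated, not inferred from occupation fractions). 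What is needed is a bound on $\p_\tau(X_s\neq x^\ast)$ that is \emph{uniform in $s$}, and this is exactly what the paper supplies in Proposition \ref{p:localization}: for the process reflected around the deep trap, $\p^{\delta_j}_{\tau,|\delta_j-\nu,\delta_j+\overline\nu|}(X_s=x)\le\mu_j(x)$ for all $s\ge0$ by reversibility/invariance (\ref{eq:reflected}), and on a good environment event $\mu_j(x)\le C n_t^{-1/(2\alpha)}$ off $\delta_j$ (\ref{eq:supprobmajo}), which is then summed over the polylogarithmically many deep traps. Some argument of this type (invariant measure, spectral, or an explicit conditioning/coupling estimate for the excursion structure) is missing from your proof, and the same hole reappears in your alternative route as the unproved statement $\lim_{\delta\downarrow0}\limsup_{t}\p(0<|X_{th}-X_t|\le\delta t^\alpha)=0$, which is again precisely the assertion that at time $t$ the walk is not sitting a few sites away from its deep trap.
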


\begin{remark}
\label{r:fonctioncorrelation}  Note that, in \cite{bertin-bouchaud}, Bertin and Bouchaud study the average position of the random walk at time $t_\omega+t$ given that a small bias $h$ is applied at time $t_\omega.$ They found several scaling regime depending on the relative value of $t,$ $t_\omega$ and $h.$
  \end{remark}
In the following, $C$ denotes a constant large enough, whose value can change from line to line.

\section{Preliminary estimates}  \label{s:prelim}
In this section, we list some properties of the environment $\tau$ and of the embedded random walk $Y$ as well as preliminary results.

 \subsection{The environment}
Let us define the critical depth for the first $n$ traps of the environment by
\begin{equation}
g(n):={n^{1/\alpha} \over (\log n)^{2 \over 1-\alpha}}.
\end{equation}
Then, we can introduce the notion of {\it deep traps} as follows:
\begin{eqnarray}
\label{eq:deep1}
 \delta_1&=&\delta_1(n):= \inf \{x \ge 0: \; \tau_x \ge g(n)\},
 \\
 \label{eq:deep2}
 \delta_{j}&=&\delta_{j}(n):= \inf \{x > \delta_{j-1}: \; \tau_x \ge g(n)\},     \qquad j \ge 2.
\end{eqnarray}
The number of such deep traps before site $n$ will be denoted by $\theta_n$ and defined by
\begin{equation}
\theta_n:= \sup\{j \ge 0: \; \delta_{j} \le n \},
\end{equation}
where $\delta_0:=0.$ Now, let us define
\begin{equation}
\varphi(n):=\p(\tau_1 \ge g(n)),
\end{equation}
and observe that (\ref{ass:stable}) implies $\varphi(n) \sim g(n)^{-\alpha} ,$ $n \to \infty.$
We introduce now the following series of events, which will occur with high probability, when $n$ goes to infinity:
\begin{eqnarray}
 \mathcal{E}_1(n)&:=&\left\{ n \varphi(n) \Big(1-{ 1 \over \log n}\Big) \le  \theta_n \le n \varphi(n) \Big(1+{ 1 \over \log n}\Big)\right\},
 \\
 \mathcal{E}_2(n)&:=&\left\{\delta_1 \wedge \min_{1\le j \le \theta_n-1} (\delta_{j+1}-\delta_j)  \ge \rho(n) \right\},
 \\
 \mathcal{E}_3(n)&:=&\left\{ \max_{-\nu(n)  \le x \le 0} \tau_x  < g(n) \right\},
\end{eqnarray}
where $\rho(n)$ and $\nu(n)$ are given, for some $0<\kappa<1/3$ and $0<\gamma<1,$ by 
\begin{eqnarray}
\rho(n)&:=& n^\kappa,
 \\
\nu(n)&:=&\lfloor (\log n)^{1+\gamma}\rfloor.
\label{eq:defnu}
\end{eqnarray}
 In words, $\mathcal{E}_1(n)$ requires that the number of deep traps is not too large, $\mathcal{E}_2(n)$ requires that the distance between two deep traps is large enough and $\mathcal{E}_3(n)$ ensures that the time spent by $X$ on $\z_-$ is negligible.
\begin{lemma}
\label{l:preliminaries}
Let $\mathcal{E}(n):= \mathcal{E}_1(n)\cap \mathcal{E}_2(n)\cap \mathcal{E}_3(n)$, then we have
\begin{equation}
\lim_{n\to\infty} \p(\mathcal{E}(n))=1.
\end{equation}
\end{lemma}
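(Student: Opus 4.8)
The plan is to use the union bound $\p(\mathcal{E}(n)^c)\le\p(\mathcal{E}_1(n)^c)+\p(\mathcal{E}_2(n)^c)+\p(\mathcal{E}_3(n)^c)$ and to show that each of the three terms on the right tends to $0$ as $n\to\infty$. The basic fact used throughout is that, since the $\tau_x$ are i.i.d., for any finite $A\subset\z$ the variables $(\mathbf{1}_{\{\tau_x\ge g(n)\}})_{x\in A}$ are i.i.d.\ Bernoulli with parameter $\varphi(n)$, and $\varphi(n)\sim g(n)^{-\alpha}$ by \eqref{ass:stable}; substituting the definition of $g(n)$, the quantity $n\,g(n)^{-\alpha}$ is a positive power of $\log n$, in particular it tends to $+\infty$.

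The control of $\mathcal{E}_1(n)$ is the heart of the matter. Writing $\theta_n=\sum_{x=0}^{n}\mathbf{1}_{\{\tau_x\ge g(n)\}}$, the variable $\theta_n$ is Binomial$(n+1,\varphi(n))$ with mean $\E\theta_n=(n+1)\varphi(n)\sim n\,g(n)^{-\alpha}$ and $\mathrm{Var}(\theta_n)\le\E\theta_n$. I would then estimate $\p(\mathcal{E}_1(n)^c)=\p\big(|\theta_n-n\varphi(n)|>n\varphi(n)/\log n\big)$ by a concentration inequality for sums of i.i.d.\ bounded variables: Chebyshev's inequality already yields $\p(\mathcal{E}_1(n)^c)\le C(\log n)^2/(n\,\varphi(n))$, and a Bernstein-type bound gives a stretched-exponential rate in $\log n$; in either case one substitutes the explicit form of $g(n)$ to conclude that the bound vanishes. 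I expect this to be the delicate point: the window in the definition of $\mathcal{E}_1(n)$ has relative width only $1/\log n$, so one must check that $n\varphi(n)$ grows fast enough compared with $(\log n)^2$ for the fluctuations of $\theta_n$ — which are of order $\sqrt{n\varphi(n)}$ — to remain inside it, and it is precisely here that the logarithmic correction built into $g(n)$ plays a role.

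For $\mathcal{E}_2(n)$, note that on $\mathcal{E}_2(n)^c$ either $\delta_1<\rho(n)$, or there exist two deep traps at positions $x<y$ with $x\le n$ and $y-x<\rho(n)$. The first possibility has probability at most $\p\big(\exists\,0\le x<\rho(n):\tau_x\ge g(n)\big)\le\rho(n)\varphi(n)$; for the second, summing over the left-most site $x$ of such a pair and using the independence of $\tau$ on $\{x\}$ and on $(x,x+\rho(n)]$ gives at most $\sum_{x=0}^{n}\varphi(n)\cdot\rho(n)\varphi(n)\le(n+1)\rho(n)\varphi(n)^2$. Hence $\p(\mathcal{E}_2(n)^c)\le C\,n\,\rho(n)\,\varphi(n)^2\sim C\,n^{\kappa-1}\big(n\,g(n)^{-\alpha}\big)^2\to0$, since $\rho(n)=n^\kappa$ with $\kappa<1$. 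For $\mathcal{E}_3(n)$, a plain union bound suffices: $\p(\mathcal{E}_3(n)^c)=\p\big(\exists\,x\in\{-\nu(n),\dots,0\}:\tau_x\ge g(n)\big)\le(\nu(n)+1)\varphi(n)\sim n^{-1}(\log n)^{1+\gamma}\big(n\,g(n)^{-\alpha}\big)\to0$.

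Adding the three estimates gives $\p(\mathcal{E}(n)^c)\to0$, i.e.\ $\p(\mathcal{E}(n))\to1$. In summary, the whole argument reduces to the binomial concentration estimate for $\theta_n$ in $\mathcal{E}_1(n)$; the events $\mathcal{E}_2(n)$ and $\mathcal{E}_3(n)$ are handled by elementary first-moment bounds.
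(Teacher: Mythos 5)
Your overall strategy (union bound, binomial concentration for $\mathcal{E}_1$, first-moment bounds for $\mathcal{E}_2$ and $\mathcal{E}_3$) is exactly what the paper has in mind — its proof only records that the number of deep traps is binomial with parameters $(n,\varphi(n))$ and leaves the rest to the reader — and your estimates for $\mathcal{E}_2$ and $\mathcal{E}_3$ are correct. The problem is the step you yourself flag as delicate: it does not close as you claim. With $g(n)=n^{1/\alpha}(\log n)^{-2/(1-\alpha)}$ one gets $\varphi(n)\sim g(n)^{-\alpha}=n^{-1}(\log n)^{2\alpha/(1-\alpha)}$, hence $n\varphi(n)\sim(\log n)^{2\alpha/(1-\alpha)}$. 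Your Chebyshev bound $C(\log n)^2/(n\varphi(n))=C(\log n)^{2-2\alpha/(1-\alpha)}$ therefore vanishes only when $2\alpha/(1-\alpha)>2$, i.e. $\alpha>1/2$, and the Bernstein exponent, which is of order $n\varphi(n)/(\log n)^2$, diverges under exactly the same condition. So the assertion ``in either case the bound vanishes'' is false for $\alpha\le 1/2$; the logarithmic correction built into $g(n)$ beats $(\log n)^2$ only in the regime $\alpha>1/2$.

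Moreover this is not cured by a sharper concentration inequality: the half-width of the window in $\mathcal{E}_1(n)$ is $n\varphi(n)/\log n=(\log n)^{2\alpha/(1-\alpha)-1}$, while the fluctuations of the binomial $\theta_n$ are of order $\sqrt{n\varphi(n)}=(\log n)^{\alpha/(1-\alpha)}$; for $\alpha<1/2$ the window is of strictly smaller order than the standard deviation (for $\alpha<1/3$ its width even shrinks below $1$), so in that regime $\p(\mathcal{E}_1(n))$ in fact tends to $0$, not $1$. A correct write-up must therefore make this exponent bookkeeping explicit and either restrict to $\alpha>1/2$, or work with a weakened version of $\mathcal{E}_1$ — replace the precision $1/\log n$ by any $\epsilon_n\to0$ with $\epsilon_n\sqrt{n\varphi(n)}\to\infty$, which is all that the later arguments (where only $\theta_n=(1+o(1))\,n\varphi(n)$ in probability is used) actually require — or take a deeper cutoff $g(n)=n^{1/\alpha}(\log n)^{-c}$ with $c\alpha>2$. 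As it stands, the key quantitative claim of your proof of the $\mathcal{E}_1$ part is not justified, and for the stated choice of $g(n)$ and window it cannot be.
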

\begin{proof} Note that the number of traps deeper
than $g(n)$ in the first $n$ traps is a binomial random variable with
parameter $(n, \varphi(n)).$  Then, recalling (\ref{ass:stable}), the proof of Lemma \ref{l:preliminaries} is easy and left to the reader. \end{proof}

Since we want to consider disjoint intervals of size $2 \nu(n)$ around the $\delta_j$'s, we introduce now a subsequence of the deep traps defined above (see (\ref{eq:deep1})-(\ref{eq:deep2})). These so-called {\it$*$-deep traps} are defined as follows:
\begin{eqnarray}
 \delta_1^*&=&\delta_1^*(n):= \inf \{x \ge \nu(n): \; \tau_x \ge g(n)\},
 \\
 \delta_{j}^*&=&\delta_{j}^*(n):= \inf \{x > \delta_{j-1}^*+2 \nu(n): \; \tau_x \ge g(n)\},     \qquad j \ge 2.
\end{eqnarray}
The number of such $*$-deep traps before site $n$ will be denoted by $\theta_n^*$ and defined by
\begin{equation}
\theta_n^*:= \sup\{j \ge 0: \; \delta_{j}^* \le n \}.
\end{equation}
For any $\nu \in \n^*$ and any $x \in \z,$ let us denote by $B_\nu(x)$ the interval $[x-\nu,x+\nu].$ Observe that the intervals $(B_{\nu(n)}(\delta_j^*))_{1 \le j \le \theta_n^*}$ will be made of independent and identically distributed portions of environment $\tau$ (up to some translation).

The following lemma tells us that the $*$-deep traps coincide
with the deep traps with an overwhelming probability
when $n$ goes to infinity.
\begin{lemma}
\label{l:*}
If $\mathcal{E}^*(n):= \{\theta_n=\theta_n^*\}$, then we have
\begin{equation}
\lim_{n\to\infty} \p(\mathcal{E}^*(n))=1.
\end{equation}
\end{lemma}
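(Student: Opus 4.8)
The plan is to show that on the event $\mathcal{E}_2(n)$ the deep traps and the $*$-deep traps coincide one by one up to index $\theta_n$, which forces $\theta_n=\theta_n^*$; since $\mathcal{E}(n)\subseteq\mathcal{E}_2(n)$ and $\p(\mathcal{E}(n))\to1$ by Lemma \ref{l:preliminaries}, this yields $\p(\mathcal{E}^*(n))\to1$.

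First I would record a purely deterministic one-sided comparison: $\delta_j\le\delta_j^*$ for every $j\ge1$ and every realization of $\tau$. This is immediate by induction, since $\delta_1^*=\inf\{x\ge\nu(n):\tau_x\ge g(n)\}\ge\inf\{x\ge0:\tau_x\ge g(n)\}=\delta_1$, and if $\delta_j\le\delta_j^*$ then, using that $a\mapsto\inf\{x>a:\tau_x\ge g(n)\}$ is non-decreasing and that $\delta_j^*+2\nu(n)\ge\delta_j$, one gets $\delta_{j+1}^*=\inf\{x>\delta_j^*+2\nu(n):\tau_x\ge g(n)\}\ge\inf\{x>\delta_j:\tau_x\ge g(n)\}=\delta_{j+1}$. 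In particular $\{j:\delta_j^*\le n\}\subseteq\{j:\delta_j\le n\}$, so $\theta_n^*\le\theta_n$ always holds.

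Next I would prove the reverse inequality on $\mathcal{E}_2(n)$, using that for $n$ large enough $\rho(n)=n^\kappa>2\nu(n)=2\lfloor(\log n)^{1+\gamma}\rfloor$. I claim $\delta_j=\delta_j^*$ for all $1\le j\le\theta_n$, by induction on $j$. For $j=1$: on $\mathcal{E}_2(n)$ we have $\delta_1\ge\rho(n)>\nu(n)$, and by minimality of $\delta_1$ no site of $[\nu(n),\delta_1)$ is a deep trap, hence $\delta_1^*=\delta_1$. For the inductive step, suppose $\delta_j=\delta_j^*$ with $j<\theta_n$; on $\mathcal{E}_2(n)$ one has $\delta_{j+1}-\delta_j\ge\rho(n)>2\nu(n)$, so $\delta_{j+1}>\delta_j^*+2\nu(n)$, and since there is no deep trap strictly between $\delta_j$ and $\delta_{j+1}$ we get $\delta_{j+1}^*=\inf\{x>\delta_j^*+2\nu(n):\tau_x\ge g(n)\}=\delta_{j+1}$. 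Taking $j=\theta_n$ gives $\delta_{\theta_n}^*=\delta_{\theta_n}\le n$, i.e. $\theta_n^*\ge\theta_n$. Together with the previous paragraph, $\theta_n^*=\theta_n$ on $\mathcal{E}_2(n)$ for all large $n$.

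Finally, combining the two steps, $\mathcal{E}_2(n)\subseteq\mathcal{E}^*(n)$ for all large $n$, so $\p(\mathcal{E}^*(n))\ge\p(\mathcal{E}_2(n))\ge\p(\mathcal{E}(n))\to1$. There is no genuine obstacle in this argument; the only points needing care are the bookkeeping at the base case $j=1$ (where the $*$-construction starts at $\nu(n)$ rather than at $0$) and the elementary size comparison $\rho(n)\gg\nu(n)$, which is exactly what makes the poly-logarithmic buffers $2\nu(n)$ around the $\delta_j^*$'s harmless relative to the $n^\kappa$-spacing guaranteed by $\mathcal{E}_2(n)$.
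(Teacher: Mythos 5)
Your proof is correct and follows the same route as the paper: the paper's own argument consists precisely of the observations that the $*$-deep traps form a subsequence of the deep traps and that $\mathcal{E}_2(n)\subset\mathcal{E}^*(n)$ for all large $n$, then invokes Lemma \ref{l:preliminaries}. You have simply supplied the (straightforward) inductive details behind that inclusion, using $\rho(n)=n^\kappa\gg 2\nu(n)$, which the paper leaves implicit.
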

\begin{proof} Recall first that the $*$-deep traps constitute a subsequence of the deep traps. Furthermore, we have $\mathcal{E}_2(n) \subset \mathcal{E}^*(n)$ for all large $n.$ Therefore, Lemma \ref{l:preliminaries} implies Lemma \ref{l:*}.
\end{proof}

\subsection{The embedded random walk}
Let us first introduce
 the hitting time $\zeta_n$ of site $n$ for the embedded random walk $Y$ defined by
 \begin{equation}
\zeta_n:=\inf\{k \ge 0: \, Y_k=n\}, \qquad n \in \n.
\end{equation}
 Since $Y$ is transient to $+ \infty$, we have $\zeta_n<\infty,$ for all $n \ge 0$ almost surely.
To control the behavior of $Y$, let us state the following result.% for $\mu>0,$
%\begin{eqnarray}
% \mathcal{A}_1(n)&:=&\left\{ \Big\vert {\zeta_n \over n}- v_\varepsilon^{-1} \Big\vert \le  \mu \right\},
% \\
% \mathcal{A}_2(n)&:=&\left\{ \min_{1\le i < j \le \zeta_n} (Y_j-Y_i)  > - \nu(n) \right\}.
%\end{eqnarray}
%Observe that, on $\mathcal{A}_2(n),$ each time $X$ (or $Y$) hits a site $x,$ it will necessarily %exit  $B_{\nu(n)}(x)$ on the right.
\begin{lemma}
\label{f:preliminaries-traj}
Let $\mathcal{A}(n):=\{ \min_{0\le i < j \le \zeta_n} (Y_j-Y_i)  > - \nu(n)\}$, then we have
\begin{equation}
\lim_{n\to\infty} \p(\mathcal{A}(n))=1.
\end{equation}
\end{lemma}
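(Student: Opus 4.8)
The plan is to show that, with high probability, the walk $Y$ started at $0$ never backtracks more than $\nu(n)$ before reaching site $n$. Since $Y$ is a biased nearest-neighbour walk with constant drift $v_\varepsilon=2\varepsilon>0$, the natural quantity to control is, for each starting site $x$, the maximal depth of its excursion to the left of $x$. Concretely, I would introduce for each $x\ge 0$ the random variable $M_x:=x-\min\{Y_k:\ \zeta_x\le k\}$, the furthest distance to the left of $x$ ever attained by the walk after its first visit to $x$ (equivalently, using the strong Markov property at $\zeta_x$, the overall leftward overshoot of a fresh biased walk started at $0$). A standard gambler's-ruin computation for the $\pm1$ walk with step probabilities $p_\varepsilon,q_\varepsilon$ gives, for any $m\ge 1$,
\begin{equation}
\p\big(M_x\ge m\big)=\Big(\frac{q_\varepsilon}{p_\varepsilon}\Big)^{m}=\Big(\frac{1/2-\varepsilon}{1/2+\varepsilon}\Big)^{m}=:r^{m},
\end{equation}
where $r\in[0,1)$ because $\varepsilon>0$. (When $\varepsilon=1/2$ the walk is deterministic and $M_x\equiv0$, so there is nothing to prove.)

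Next I would observe that the bad event $\mathcal{A}(n)^c=\{\min_{0\le i<j\le\zeta_n}(Y_j-Y_i)\le-\nu(n)\}$ forces some site $x$ with $0\le x\le n$ to have $M_x\ge\nu(n)$: indeed if $Y_j-Y_i\le-\nu(n)$ with $i<j\le\zeta_n$, then taking $x:=Y_i\in\{0,\dots,n\}$ (the walk stays in $[\,\min,\,n\,]$ up to $\zeta_n$, and $Y_i\le n$ since $i<\zeta_n$… more carefully, $Y_i$ ranges over a finite set of visited sites all $\le n$) we get that after time $\zeta_x\le i$ the walk descends to level $Y_j\le x-\nu(n)$, so $M_x\ge\nu(n)$. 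Hence by a union bound over the visited sites, together with the fact that on an event of probability tending to $1$ the walk visits at most, say, $Cn$ distinct sites before $\zeta_n$ (this follows from transience and the law of large numbers $(\ref{eq:LLN})$, which gives $\zeta_n\le Cn$ w.h.p., and each site is visited finitely often), one obtains
\begin{equation}
\p\big(\mathcal{A}(n)^c\big)\le Cn\, r^{\nu(n)}+o(1).
\end{equation}

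Finally I would plug in $\nu(n)=\lfloor(\log n)^{1+\gamma}\rfloor$ from $(\ref{eq:defnu})$: since $r<1$, $r^{\nu(n)}=\exp(-c(\log n)^{1+\gamma})$ with $c=-\log r>0$, and because $1+\gamma>1$ this decays faster than any power of $n$, so $Cn\, r^{\nu(n)}\to0$. Therefore $\p(\mathcal{A}(n))\to1$. The only mildly delicate point — the main obstacle — is making the union bound clean: one should avoid summing over all integer sites $0,\dots,n$ naively if a sharper count is wanted, but in fact the crude bound over $\{0,1,\dots,n\}$ already suffices here since $\p(M_x\ge\nu(n))=r^{\nu(n)}$ is summable against the factor $n$; so even the loss-free statement "$\p(\exists\,0\le x\le n:\ M_x\ge\nu(n))\le (n+1)r^{\nu(n)}$", obtained by applying the strong Markov property at each $\zeta_x$, closes the argument without needing the distinct-sites estimate at all. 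I would present the proof in that streamlined form.
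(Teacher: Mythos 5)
Your argument is correct, but it takes a genuinely different route from the paper. You control backtracking site by site: applying the strong Markov property at $\zeta_x$ and the gambler's ruin formula, the maximal leftward excursion $M_x$ below a site $x$ satisfies $\p(M_x\ge m)=r_\varepsilon^{m}$ with $r_\varepsilon=q_\varepsilon/p_\varepsilon<1$, and a union bound over the deterministic sites $x\in\{0,\dots,n\}$ gives $\p(\mathcal{A}(n)^c)\le (n+1)\,r_\varepsilon^{\nu(n)}\to 0$ since $\nu(n)=\lfloor(\log n)^{1+\gamma}\rfloor$ makes $r_\varepsilon^{\nu(n)}$ super-polynomially small. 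The paper instead first truncates time via the law of large numbers ($\zeta_n\le cn$ w.h.p.), takes a union bound over the $O(n^2)$ pairs of times $(i,j)$, and applies a Chernoff/Cram\'er upper bound, using $\Lambda(\log r_\varepsilon)=0$ to get $I(x)\ge x\log r_\varepsilon$ for $x\le0$; this also yields $Cn^2 r_\varepsilon^{\nu(n)}\to0$. Your approach is more elementary (no large deviations, no time truncation, and it makes transparent that the exponential rate is exactly the ruin probability $r_\varepsilon$), at the cost of being specific to nearest-neighbour walks, whereas the paper's Chernoff argument would survive with non-nearest-neighbour increments. One small point to tidy in your write-up: when you extract a site from a bad pair $(i,j)$ you set $x:=Y_i$, but $Y_i$ may be negative; in that case simply take $x=0$, since $Y_j\le Y_i-\nu(n)<-\nu(n)$ already forces $M_0\ge\nu(n)$, so the inclusion $\mathcal{A}(n)^c\subseteq\bigcup_{x=0}^{n}\{M_x\ge\nu(n)\}$ still holds and your streamlined bound closes the proof.
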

Observe that, on $\mathcal{A}(n),$ each time $X$ (or $Y$) hits a site $x,$ it will necessarily exit  $B_{\nu(n)}(x)$ on the right.
\begin{proof} Let us fix $c>v_{\varepsilon}^{-1}.$ Then, observe that the law of large numbers implies that $\p(\zeta_n\le cn)\to1,$ $n \to \infty.$ Therefore, it is sufficient to prove that $\p(\min_{0\le i < j \le cn} (Y_j-Y_i)  \le - \nu(n))\to 0,$ $n \to \infty.$ Let us write
\begin{eqnarray}
\p\Big(\min_{0\le i < j \le cn} (Y_j-Y_i)  \le - \nu(n)\Big) &\le& C \, n^2 \max_{0\le  j \le cn} \p(Y_j  \le - \nu(n)). 
\label{eq:ld11}
\end{eqnarray}
Now,  for all $x$ and every $t \le 0,$ an application of Chebycheff's inequality yields
\begin{eqnarray}
\p(Y_j / j  \le x) &=&  \E [{\bf 1}_{\{ Y_j-jx\le0 \}}] \le \E [\ee^{t(Y_j-jx)}]
\nonumber
\\
&=& \ee^{-j t x} \E [\ee^{t Y_1}]^j=\ee^{-j  \{ tx - \Lambda(t) \}},
\label{eq:ld12}
\end{eqnarray}
where $\Lambda(t):=\log \e[\ee^{t Y_1}]$ denotes the logarithmic moment generating function associated with the law of $Y_1.$ By taking the infimum over $t\le0$ in (\ref{eq:ld12}), we get 
\begin{eqnarray}
\p(Y_j / j  \le x) &\le& \ee^{-j I(x)},
\label{eq:ldupperbound}
\end{eqnarray}
where $I(x):=\sup_{t \le 0}\{tx-\Lambda(t)\}.$ Note that (\ref{eq:ldupperbound}) corresponds to the upper bound in the LDP (large deviation principle) for an i.i.d. sequence (see Cramer's theorem in $\r,$ \cite{dembo-zeitouni98} page 27). Since $\e[Y_1]=v_\varepsilon>0,$ we have $I(x)=\sup_{t \in \r}\{tx-\Lambda(t)\}$ for $x \le v_\varepsilon$ (see (2.2.7) in \cite{dembo-zeitouni98}), which means that $I$ is the convex rate function associated with $Y.$ Now, assembling (\ref{eq:ld11}) and (\ref{eq:ldupperbound}) yields
\begin{eqnarray}
\p\Big(\min_{0\le i < j \le cn} (Y_j-Y_i)  \le - \nu(n)\Big)  &\le& C \, n^2 \max_{0\le  j \le cn} \ee^{-j I\left({- \nu(n) \over j}\right)}.
\label{eq:ld1}
\end{eqnarray}
Then, Lemma \ref{f:preliminaries-traj} will be a consequence of 
 \begin{equation} \label{eq:ld2}
\sup_{x \le 0} {I(x) \over x} \le \log r_\varepsilon <0,
\end{equation}
where  $r_\varepsilon:= q_\varepsilon/ p_\varepsilon<1.$
To prove (\ref{eq:ld2}),
 observe that an easy computation yields $\Lambda(\log r_\varepsilon)=0.$  Therefore, by definition $I(x)\ge x \log r_\varepsilon$ for all $x \le 0,$ which gives  (\ref{eq:ld2}).
 
Finally, assembling (\ref{eq:ld1}) and  (\ref{eq:ld2}) implies that $\p(\min_{0\le i < j \le cn} (Y_j-Y_i)  \le - \nu(n)) \le C n^2 \ee^{\nu(n) \log r_\varepsilon}$ which tends to 0 when $n$ tends to infinity (recall that $\nu(n)$ is defined in (\ref{eq:defnu}) and satisfies $\nu(n)=\lfloor (\log n)^{1+\gamma}\rfloor$ for some $0<\gamma<1$).
\end{proof}

\subsection{Between deep traps} Here, we prove that the time spent between deep traps is negligible.
\begin{lemma}
\label{l:IA}  Let us define $ \mathcal{I}(n):=\left\{ \sum_{i=0}^{\zeta_n} \tau_{Y_i} {\bf e}_i {\bf 1}_{\{\tau_{Y_i} < g(n)\}}  <  { n^{1/\alpha} \over \log n }   \right\}.$ Then, we have
  \begin{equation}
\p\left(\mathcal{I}(n)\right) \to 1, \qquad n \to \infty.
\end{equation}
\end{lemma}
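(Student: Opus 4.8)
The plan is to control the first moment of
\[
Z_n := \sum_{i=0}^{\zeta_n} \tau_{Y_i}\, {\bf e}_i\, {\bf 1}_{\{\tau_{Y_i} < g(n)\}}
\]
and then conclude by Markov's inequality. Since every summand is nonnegative, Fubini's theorem gives $\e[Z_n] = \sum_{i\ge0} \e\big[\tau_{Y_i}\, {\bf e}_i\, {\bf 1}_{\{\tau_{Y_i} < g(n)\}}\, {\bf 1}_{\{\zeta_n \ge i\}}\big]$. First I would drop the ${\bf e}_i$ factor, using that $({\bf e}_i)_{i\ge0}$ is independent of $(\tau,Y)$ and has mean one. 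Next, since $\tau$ is independent of the walk $Y$ and the event $\{\zeta_n \ge i\}$ is $\sigma(Y)$-measurable, conditioning on the path $Y$ turns $\tau_{Y_i}$ into a fresh copy of $\tau_1$: for each fixed $i$ one has $\e[\tau_{Y_i}\, {\bf 1}_{\{\tau_{Y_i} < g(n)\}} \mid Y] = m(n)$, where $m(n) := \e[\tau_1\, {\bf 1}_{\{\tau_1 < g(n)\}}]$ depends neither on $i$ nor on the path. Hence $\e[Z_n] = m(n)\sum_{i\ge0}\p(\zeta_n\ge i) = m(n)\,(1+\e[\zeta_n])$.

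Two elementary estimates then remain. For the truncated mean, assumption (\ref{ass:stable}) gives $m(n) \le \e[\tau_1 \wedge g(n)] = \int_0^{g(n)} \p(\tau_1 > u)\, \d u \le C\, g(n)^{1-\alpha}$ for all large $n$ (bound $\p(\tau_1>u)$ by $2u^{-\alpha}$ for large $u$, with a bounded contribution near the origin). For the hitting time, the embedded walk reaches $n$ as a sum of $n$ i.i.d.\ first-passage times from $k-1$ to $k$, whose common mean $h$ satisfies the one-step recursion $h = 1 + q_\varepsilon\cdot 2h$, so that $h = 1/(2\varepsilon)$ and $\e[\zeta_n] = n/(2\varepsilon) \le Cn$. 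Plugging these in and recalling $g(n) = n^{1/\alpha}/(\log n)^{2/(1-\alpha)}$, so that $g(n)^{1-\alpha} = n^{(1-\alpha)/\alpha}(\log n)^{-2}$, we obtain
\[
\e[Z_n] \le C\, n\, g(n)^{1-\alpha} = C\, \frac{n^{1/\alpha}}{(\log n)^2}.
\]
Markov's inequality then yields $\p\big(Z_n \ge n^{1/\alpha}/\log n\big) \le C/\log n \to 0$, which is precisely $\p(\mathcal{I}(n)) \to 1$.

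I do not expect a genuine obstacle: this is a first-moment bound. The only points requiring a little care are the Fubini step with the random summation bound $\zeta_n$, and the claim that conditioning on $Y$ replaces $\tau_{Y_i}$ by an independent copy of $\tau_1$ — this uses the i.i.d.\ structure of $\tau$ and its independence from $Y$, whereas the correlations among different $\tau_{Y_i}$ produced when $Y$ revisits a site play no role for a first moment. If one prefers not to invoke $\e[\zeta_n]<\infty$, one may instead restrict at the outset to the event $\{\zeta_n \le cn\}$ with $c > v_\varepsilon^{-1}$, which has probability tending to $1$ by the law of large numbers exactly as in the proof of Lemma \ref{f:preliminaries-traj}, and then run the same computation with $\zeta_n$ replaced by $cn$.
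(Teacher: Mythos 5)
Your argument is correct: the first-moment computation via Fubini, the factorization of the mean-one clocks ${\bf e}_i$, and the key observation that $\tau$ is independent of the embedded walk $Y$ (so that conditioning on the path gives $\e[\tau_{Y_i}{\bf 1}_{\{\tau_{Y_i}<g(n)\}}\mid Y]=m(n)$ regardless of revisits) are all valid, and the estimates $m(n)\le C g(n)^{1-\alpha}$ and $\e[\zeta_n]=n/(2\varepsilon)$ give exactly $\e[Z_n]\le C n^{1/\alpha}(\log n)^{-2}$, after which Markov's inequality finishes the proof. This is, however, a different decomposition from the paper's. The paper also uses a first moment plus Markov, but it organizes the sum \emph{by site} rather than by time: it first restricts to the event $\mathcal{A}(n)$ of Lemma \ref{f:preliminaries-traj} so that the range of $Y$ before $\zeta_n$ is contained in $[-\nu(n),n]$, then bounds the \emph{quenched} expectation by $\sum_{x=-\nu(n)}^{n}\tau_x(1+G(x,n)){\bf 1}_{\{\tau_x<g(n)\}}$, where $1+G(x,n)$ is the mean number of visits to $x$ (a geometric variable, uniformly bounded), and finally bounds the truncated mean $\e[\tau_0;\tau_0<g(n)]$ by a geometric-scale decomposition. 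Your route is more self-contained: it needs neither Lemma \ref{f:preliminaries-traj} nor the Green function $G(x,n)$, and it yields the exact identity $\e[Z_n]=m(n)(1+\e[\zeta_n])$. What the paper's site-wise, quenched formulation buys is reusability: the same visit-count bound, conditional on the environment in a fixed box, is invoked again in the proof of Lemma \ref{l:laplacetransform} (the estimate (\ref{eq:tildeT})), where a purely annealed time-decomposition would not plug in as directly. Two small remarks: the recursion $h=1+2q_\varepsilon h$ presupposes $h<\infty$, but your fallback of restricting to $\{\zeta_n\le cn\}$ (or a standard argument for first-passage times of a transient biased walk) removes any issue; and dropping the ${\bf e}_i$ indeed requires their independence from $(\tau,Y)$, which holds by the construction of the clock process in Definition \ref{d:clock+embedded}.
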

\begin{proof}  Observe first that, on $ \mathcal{A}(n),$ we have $\inf_{i \le \zeta_n} Y_i \ge -\nu(n)$  and that Lemma \ref{f:preliminaries-traj}  implies $\p\left(\mathcal{I}(n)^c\right)=\p\left(\mathcal{I}(n)^c \cap  \mathcal{A}(n) \right)+o(1).$  Therefore, using Markov inequality, we only have to prove that
  \begin{equation} \label{sumgreen}
\e\bigg[ \sum_{i=0}^{\zeta_n} \tau_{Y_i} {\bf e}_i{\bf 1}_{\{{Y_i} \ge -\nu(n)\}}   {\bf 1}_{\{ \tau_{Y_i} < g(n)\}}  \bigg] = o\Big(  { n^{1/\alpha} \over \log n } \Big), \qquad n \to \infty.
\end{equation}
After reaching $x \in\left[ -\nu(n) , n \right]$ (if $x$ is reached), the process $Y$ visits $x$ a geometrically distributed number of times before hitting $n.$  The parameter of this geometrical random variable is equal to $q_\varepsilon+p_\varepsilon\, \psi(x,n),$ where $\psi(x,n)$ denotes the probability that $Y$ starting at $x+1$ hits $x$ before $n.$ An easy computation yields that
  \begin{equation}
  \psi(x,n)=r_\varepsilon{1- r_\varepsilon^{n-x-1} \over 1-r_\varepsilon^{n-x}},
  \end{equation}
 with  $r_\varepsilon= q_\varepsilon/ p_\varepsilon<1.$
 We will denote by $G(x,n)$ the mean of this geometrical random variable. Moreover, let us use respectively  $\p_{\tau}(\cdot)$ and $\e_{\tau}[\cdot]$ to denote the conditional probability and the conditional expectation with respect to $\tau$ (sometimes called quenched expectation).
Recalling that each visit takes an exponential time of mean $\tau_x,$ we obtain
  \begin{equation} \label{sumgreen2}
\e_\tau \bigg[ \sum_{i=0}^{\zeta_n} \tau_{Y_i} {\bf e}_i{\bf 1}_{\{{Y_i} \ge -\nu(n)\}}   {\bf 1}_{\{ \tau_{Y_i} < g(n)\}}  \bigg]  \le \sum_{x=-\nu(n)}^{n} \tau_x (1+G(x,n))  {\bf 1}_{\{\tau_{x} < g(n)\}}.
 \end{equation}
Since $x \mapsto G(x,n)$ is decreasing and $G(-\nu(n),n) \to (1-v_\varepsilon)/v_\varepsilon,$ when $n \to \infty,$ we get that the expectation in (\ref{sumgreen}) is, for all large $n,$ less than
 $C n \, \e\left[\tau_0 \, ; \, \tau_0 <g(n) \right]=C n \, \e\left[\tau_0 \, ; \, 1<\tau_0 <g(n) \right]+O(n).$ Now, let us fix $0<\rho<1$ and introduce
$\omega=\omega(n):=\inf \{ j\ge 0: \;  \rho \le \rho^{j} g(n) <1\}.$ Then, we get
 \begin{eqnarray}
  \e\left[\tau_0 \, ; \, 1<\tau_0 <g(n) \right] &\le&  g(n) \sum_{j=0}^{\omega-1}  \rho^{j} \p( \tau_0> \rho^{j+1} g(n))
\\
&\le& C  g(n)^{1-\alpha} \sum_{j=0}^{\omega-1}  \rho^{-\alpha j} \le C g(n)^{1-\alpha},
\nonumber
\end{eqnarray}
where we used the fact that (\ref{ass:stable}) yields that there exists $0<C<\infty$ such that $ \p (\tau_x \ge u)\le C u^{-\alpha},$ for all $u > 0.$ Therefore, recalling (\ref{sumgreen}), the fact that $n g(n)^{1-\alpha}$ is a $o(n^{1/\alpha}/ \log n)$ concludes the proof of Lemma \ref{l:IA}.  \end{proof}

\subsection{Occupation time of a deep trap}
Since $\zeta_y<\infty$ for all $y \in \n,$ we can properly define for $x \in \n,$
 \begin{eqnarray}
 \label{eq:defTx}
 T_x=T_x(n)&:=&\sum_{0}^{\zeta_{x+\nu(n)}} \tau_{Y_i} {\bf e}_i{\bf 1}_{\{{Y_i}=x\}},
 \\
   \overline T_x=\overline T_x(n)&:=&\sum_{0}^{\zeta_{x+\nu(n)}} \tau_{Y_i} {\bf e}_i{\bf 1}_{\{{Y_i}\in B_{\nu(n)}(x)\}}.
\end{eqnarray}
Moreover, let us introduce $\p^x$ and $\e^x$ the probability and the expectation associated with the process starting at site $x.$ We have the following estimate for the Laplace transforms of $T_x$ and $\overline T_x.$
\begin{lemma}
\label{l:laplacetransform} For all $x \in \n$ and all $\lambda>0,$ we have
 \begin{equation}
\label{eq:laplace} \E^x \Big[1-e^{-\lambda_n {T_x}} \vert \tau_x \ge g(n) \Big] \sim
{\p(\tau_x \ge g(n))^{-1} \over n} { \alpha \pi \over \sin( \alpha\pi)} \, v_\varepsilon^{-\alpha}
\, \lambda^\alpha, \qquad n\to \infty,
\end{equation}
where $\lambda_n:=\lambda / n^{1/\alpha}.$ Moreover, the same result holds with $T_x$ replaced by $\overline T_x.$
\end{lemma}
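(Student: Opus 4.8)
The plan is to work first at the quenched level, i.e.\ conditionally on the whole environment $\tau$, and only afterwards average over $\{\tau_x\ge g(n)\}$. Starting from $x$, the embedded walk $Y$ visits $x$ a random number of times $N_x$ before hitting $x+\nu(n)$; at each visit it returns to $x$ before $x+\nu(n)$ either after a left step (probability $q_\varepsilon$, the return being then certain since $Y$ is transient to $+\infty$ and every path from $x-1$ to $x+\nu(n)$ goes through $x$) or after a right step followed by a return (probability $p_\varepsilon\,\psi(x,x+\nu(n))$). Hence $N_x$ is a geometric variable on $\{1,2,\dots\}$ with parameter $a_n:=p_\varepsilon\bigl(1-\psi(x,x+\nu(n))\bigr)$, the probability of not returning to $x$; by the explicit formula $\psi(x,x+\nu(n))=r_\varepsilon\frac{1-r_\varepsilon^{\nu(n)-1}}{1-r_\varepsilon^{\nu(n)}}$ of Lemma~\ref{l:IA}, this does not depend on $x$ and $a_n\to p_\varepsilon(1-r_\varepsilon)=p_\varepsilon-q_\varepsilon=v_\varepsilon$. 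Since the successive holding times at $x$ are $\tau_x$ times i.i.d.\ mean-one exponentials independent of $Y$, conditionally on $\{N_x=k\}$ and $\tau_x$ the variable $T_x$ is $\tau_x$ times a $\mathrm{Gamma}(k,1)$ law, so $\E^x[e^{-\lambda_n T_x}\mid N_x=k,\tau_x]=(1+\lambda_n\tau_x)^{-k}$. Summing the geometric series gives the exact identity
\[
  \E^x\bigl[\,1-e^{-\lambda_n T_x}\mid\tau_x\bigr]=\frac{\lambda_n\tau_x}{a_n+\lambda_n\tau_x}.
\]

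Next I would integrate this over $\{\tau_x\ge g(n)\}$ using only the tail $\p(\tau_x\ge u)\sim u^{-\alpha}$. With $h(u):=\lambda_n u/(a_n+\lambda_n u)$, an integration by parts gives $\E[h(\tau_x)\indic_{\{\tau_x\ge g(n)\}}]=h(g(n))\,\varphi(n)+\int_{g(n)}^{\infty}h'(u)\,\p(\tau_x\ge u)\,du$ with $h'(u)=\lambda_n a_n/(a_n+\lambda_n u)^2$, and the substitution $u=a_n v/\lambda_n$ turns the integral into $\bigl(1+o(1)\bigr)(\lambda_n/a_n)^{\alpha}\int_{\lambda_n g(n)/a_n}^{\infty}v^{-\alpha}(1+v)^{-2}\,dv$. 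Since $\lambda_n g(n)=\lambda(\log n)^{-2/(1-\alpha)}\to 0$, the lower limit tends to $0$ and the integral converges to $\int_0^{\infty}v^{-\alpha}(1+v)^{-2}\,dv=\Gamma(1-\alpha)\Gamma(1+\alpha)=\alpha\,\Gamma(1-\alpha)\Gamma(\alpha)=\alpha\pi/\sin(\alpha\pi)$ by the Beta integral and the reflection formula; the boundary term $h(g(n))\varphi(n)\sim(\lambda_n/a_n)g(n)^{1-\alpha}=(\lambda_n g(n)/a_n)^{1-\alpha}(\lambda_n/a_n)^{\alpha}$ is negligible because $1-\alpha>0$. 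Inserting $\lambda_n=\lambda/n^{1/\alpha}$ (so $\lambda_n^{\alpha}=\lambda^{\alpha}/n$) and $a_n\to v_\varepsilon$, and dividing by $\varphi(n)=\p(\tau_x\ge g(n))$, yields exactly (\ref{eq:laplace}).

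For $\overline T_x$ I would write $\overline T_x=T_x+\Delta_x$, where $\Delta_x\ge0$ is the occupation time of $B_{\nu(n)}(x)\setminus\{x\}$ before time $\zeta_{x+\nu(n)}$, and use $\bigl|e^{-\lambda_n T_x}-e^{-\lambda_n\overline T_x}\bigr|\le\lambda_n\Delta_x$ together with the crude bound $1-e^{-\lambda_n\overline T_x}\le 1$. On the event $\mathcal W_x:=\{\tau_y<g(n)\ \text{for all}\ y\in B_{\nu(n)}(x)\setminus\{x\}\}$, which is independent of $\tau_x$ with $\p(\mathcal W_x^c)\le 2\nu(n)\varphi(n)$, the argument of Lemma~\ref{l:IA} (each site of $[x-\nu(n),x+\nu(n)]$ is visited a number of times of bounded mean, and $\E[\tau_0\indic_{\{\tau_0<g(n)\}}]\le C g(n)^{1-\alpha}$) gives $\E^x[\Delta_x\mid\tau]\le C\nu(n)g(n)^{1-\alpha}$; on $\mathcal W_x^c$ one bounds the integrand by $1$. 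The two resulting error contributions to the numerator are thus at most $\lambda_n\,\varphi(n)\,C\nu(n)g(n)^{1-\alpha}$ and $\varphi(n)\cdot 2\nu(n)\varphi(n)$, and after dividing by $\varphi(n)$ both are $o\bigl(g(n)^{\alpha}/n\bigr)$ once one plugs in $g(n)=n^{1/\alpha}(\log n)^{-2/(1-\alpha)}$ and $\nu(n)=\lfloor(\log n)^{1+\gamma}\rfloor$; hence (\ref{eq:laplace}) holds verbatim for $\overline T_x$.

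The Tauberian-type computation of the second step is routine. The point requiring genuine care is the treatment of $\overline T_x$: a plain first-moment bound on $\Delta_x$ fails as soon as $B_{\nu(n)}(x)$ contains a second deep trap (whose depth has infinite mean), so that scenario must be discarded through its probability $O(\nu(n)\varphi(n))$ against the crude bound $1-e^{-\lambda_n\overline T_x}\le1$ — and it is precisely the specific choices of $g(n)$ (critical depth) and $\nu(n)$ (polylogarithmic window size) that make all these error terms $o(g(n)^{\alpha}/n)$. One should also note that all the $o(\cdot)$'s above are uniform in $x\in\n$, which is automatic here since neither $a_n$ nor the geometric law of $N_x$ depends on $x$.
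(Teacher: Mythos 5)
Your proposal is correct and follows essentially the same route as the paper: a quenched computation of the Laplace transform of $T_x$ via the geometric number of visits to $x$ before $\zeta_{x+\nu(n)}$, then an integration by parts over the tail of $\tau_x$ reducing to the Beta integral $\Gamma(1+\alpha)\Gamma(1-\alpha)=\alpha\pi/\sin(\alpha\pi)$, and for $\overline T_x$ the removal of the event that $B_{\nu(n)}(x)$ contains a second deep trap followed by a first-moment bound on the extra occupation time as in Lemma~\ref{l:IA}. The only (harmless) difference is that you keep the escape probability $a_n$ exact, obtaining the identity $\E^x_\tau[1-e^{-\lambda_n T_x}]=\lambda_n\tau_x/(a_n+\lambda_n\tau_x)$, where the paper replaces $1+G(x,x+\nu(n))$ by $v_\varepsilon^{-1}$ up to an $o(n^{-1/\alpha})$ error and uses the equivalent substitution leading to $\int_0^1 y^{-\alpha}(1-y)^{\alpha}\d y$.
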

\begin{proof}
Let us first write
 \begin{equation}
 \label{eq:laplaceoverlineT}
 \E^x\Big[(1-e^{-\lambda_n {T_x}}){\bf 1}_{\{\tau_x \ge g(n)\}}  \Big]=  \E\Big[ \E^x_\tau[1-e^{-\lambda_n {T_x}}   ]  {\bf 1}_{\{\tau_x \ge g(n)\}}  \Big].
  \end{equation}
Starting at site $x,$ the process $Y$ visits $x$ a geometrically distributed number of times before reaching $x+\nu(n).$ An easy computation yields that  the mean of this geometrical variable, denoted by $G(x,x+\nu(n))$ satisfies $1+G(x,x+\nu(n)) \to v_\varepsilon^{-1},$ when $n \to \infty.$ Therefore, recalling that each visit takes an exponential time of mean $\tau_x,$ we obtain
 \begin{equation}
 \E^x_\tau[e^{-\lambda_n{T_x}}   ]=  {1 \over 1 +\lambda_n  v_\varepsilon^{-1}\tau_x } +o(n^{-1/\alpha}), \qquad n \to \infty.
  \end{equation}
Now, using an integration by part, we get that $ \E^x\Big[(1-e^{-\lambda_n {T_x}}){\bf 1}_{\{\tau_x \ge g(n)\}}  \Big]$ is equal to
 \begin{equation}
\Big[- {\lambda_n v_\varepsilon^{-1} z  \over 1+\lambda_n  v_\varepsilon^{-1}  z} \p(\tau_x \ge z)
\Big]_{g(n)}^\infty +\int_{g(n)}^\infty
{\lambda_n v_\varepsilon^{-1} \over (1+\lambda_n v_\varepsilon^{-1} z)^2} \p(\tau_x \ge z) \d z  + o(n^{-1/\alpha}).
\end{equation}
The first term is lower than
$C\lambda_ng(n)^{1-\alpha}=C \lambda_n^\alpha (\lambda_n g(n))^{1-\alpha}=o(n^{-1}),$ since $\alpha<1$. For the second term, using (\ref{ass:stable}), we can estimate $\p(\tau_x \ge z)$ by
$
(1-\eta)z^{-\alpha} \le \p(\tau_x \ge z)\le (1+\eta) z^{-\alpha},
$
for any $\eta,$ when $n$ is sufficiently large (recall that $g(n) \to \infty,$ when $n \to \infty$). Hence, we are lead to compute the integral
 \begin{equation}
\int_{g(n)}^\infty {\lambda_n v_\varepsilon^{-1} \over (1+\lambda_n v_\varepsilon^{-1} z)^2}
z^{-\alpha} \d z =(\lambda_n v_\varepsilon^{-1})^\alpha \int_{{\lambda_n v_\varepsilon^{-1}g(n)}\over
1+\lambda_n v_\varepsilon^{-1} g(n)}^1 y^{-\alpha}(1-y)^\alpha \d y,
 \end{equation}
(making the change of variables $y=\lambda_n v_\varepsilon^{-1} z/(1+\lambda_n v_\varepsilon^{-1} z)$). For $\alpha<1$ this integral converges, when $n\to \infty,$ to
$
\Gamma(\alpha+1)\Gamma(-\alpha+1)={\pi \alpha \over \sin(\pi \alpha)},
$
which concludes the proof of (\ref{eq:laplace}).

To prove that the result is true with $\overline T_x$ in place of $T_x,$ observe first that $\p(\tau_x\ge g(n) ; \max_{y \in B_{\nu (n)}(x) \setminus \{x\}} \tau_y \ge g(n))=o(n^{-1}),$ when $n \to \infty,$ which implies
 \begin{equation}
 \label{eq:overline-sans}
\E^x\Big[(1-e^{-\lambda_n {\overline T_x}}){\bf 1}_{\{\tau_x \ge g(n)\}}  \Big]= \E^x\Big[(1-e^{-\lambda_n {\overline T_x}}){\bf 1}_{\mathcal{E}_4(n)}  \Big]+o(n^{-1}),
\end{equation}
where $\mathcal{E}_4(n):= \{\max_{y \in B_{\nu (n)}(x) \setminus \{x\}} \tau_y < g(n) \le \tau_x \}.$ Then, let us introduce $\tilde T_x:= \sum_{0}^{\zeta_{x+\nu(n)}} \tau_{Y_i} {\bf e}_i{\bf 1}_{\{{Y_i}\in B_{\nu(n)}(x)\setminus \{x\} \}}=\overline T_x- T_x$ and write
 \begin{equation}
 \label{eq:diff}
 \E^x\Big[(e^{-\lambda_n {T_x}}-e^{-\lambda_n {\overline T_x}}){\bf 1}_{\mathcal{E}_4(n)}  \Big]
\le \lambda_n  \E^x\Big[{\tilde T_x}{\bf 1}_{\mathcal{E}_4(n)}  \Big],
\end{equation}
where we used the fact that $1-\ee^{-x}\le x,$ for any $x\in \r.$ Using the same arguments as in the proof of Lemma \ref{l:IA}, we can prove that
 \begin{equation}
 \label{eq:tildeT}
\E^x_\tau\Big[{\tilde T_x}{\bf 1}_{\mathcal{E}_4(n)}  \Big] \le  {\bf 1}_{\{\tau_{x} \ge g(n)\}}  \sum_{y \in B_{\nu(n)}(x)\setminus \{x\}} \tau_y (1+G(y,x+\nu(n))  {\bf 1}_{\{\tau_{y} < g(n)\}}.
\end{equation}
Using the fact that the previous sum depends only on sites $y$ in $B_{\nu(n)}(x)$ which are different from $x,$ together with the same arguments as in the proof of Lemma \ref{l:IA}, we get
$\E^x\Big[{\tilde T_x}{\bf 1}_{\mathcal{E}_4(n)}  \Big] \le C \nu(n) g(n)^{1-\alpha}  \p(\tau_{x} \ge g(n)) \le C  \nu(n) g(n)^{1-2 \alpha}.$ Therefore, we obtain that the left-hand term in (\ref{eq:diff}) is a $o(n^{-1}),$ which together with (\ref{eq:overline-sans}) concludes the proof of Lemma \ref{l:laplacetransform}.
\end{proof}

\section{Proof of Theorem \ref{t:scaling}}  \label{s:scaling}
Let us first define $H_x:=\inf\{ t\ge 0: X_t=x\},$ for any $x\in \n.$
Now, fix $T>0,$ and let $H^{(N)}_t$ be the sequence of elements of $D(\left[0,T\right])$ defined by
 \begin{equation}
H^{(N)}_t:= {H_{\lfloor t N \rfloor} \over N^{1/\alpha}}, \qquad 0 \le t \le T.
\end{equation}

\begin{proposition}
\label{p:scaling limit}  The distribution of the process $( H^{(N)}_t; \, 0 \le t \le T)$ converges weakly to the distribution of $(v_\varepsilon^{\#})^{-1/\alpha} \, V_\alpha(t); \, 0 \le t \le T)$ on $D(\left[0,T\right])$ equipped with the Skorokhod $M_1$-topology, where $(V_\alpha(t); \, t \ge 0)$ is is an $\alpha$-stable subordinator satisfying $\e[\ee^{-\lambda V_\alpha(t)}] =\ee^{-t \lambda^\alpha}.$ 
\end{proposition}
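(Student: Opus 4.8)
\emph{Proof strategy.} The plan is to show that, after division by $N^{1/\alpha}$ and up to an error that is uniformly negligible on $[0,T]$, the hitting time $H_{\lfloor tN\rfloor}$ equals the sum of the occupation times of the blocks around the $*$-deep traps sitting below level $\lfloor tN\rfloor$, and that these occupation times form a triangular array of i.i.d.\ nonnegative variables whose rescaled partial-sum process converges to $(v_\varepsilon^\#)^{-1/\alpha}V_\alpha$. First I restrict to $\mathcal{E}(n)\cap\mathcal{E}^*(n)\cap\mathcal{A}(n)\cap\mathcal{I}(n)$ with $n=\lfloor TN\rfloor$, an event of probability tending to $1$ by Lemmas \ref{l:preliminaries}, \ref{l:*}, \ref{f:preliminaries-traj} and \ref{l:IA}. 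On this event, using $H_m=\sum_{i=0}^{\zeta_m-1}\tau_{Y_i}{\bf e}_i$, the facts that $Y$ never backtracks by more than $\nu(n)$, that $\mathcal{E}_3(n)$ forbids deep traps on $\z_-$, and that on $\mathcal{E}^*(n)$ the deep traps coincide with the $*$-deep traps, one gets
\[
\Big|\,H_{\lfloor tN\rfloor}-\sum_{j=1}^{\theta^*_{\lfloor tN\rfloor}}\overline{T}_{\delta_j^*}\,\Big|\ \le\ C\sum_{i=0}^{\zeta_{\lfloor TN\rfloor}}\tau_{Y_i}{\bf e}_i\,{\bf 1}_{\{\tau_{Y_i}<g(\lfloor TN\rfloor)\}}\ =\ o\big(N^{1/\alpha}\big),
\]
uniformly in $t\in[0,T]$, the last bound being Lemma \ref{l:IA} at level $\lfloor TN\rfloor$ (note $g$ is increasing, so $\tau_{Y_i}<g(\lfloor tN\rfloor)$ implies $\tau_{Y_i}<g(\lfloor TN\rfloor)$). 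Hence it suffices to prove the convergence for $\widetilde H^{(N)}_t:=N^{-1/\alpha}\sum_{j=1}^{\theta^*_{\lfloor tN\rfloor}}\overline{T}_{\delta_j^*}$.

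Next I argue that the $\overline{T}_{\delta_j^*}$ are essentially i.i.d. On $\mathcal{E}_2(n)\cap\mathcal{A}(n)$ the intervals $B_{\nu(n)}(\delta_j^*)$ are pairwise disjoint, $Y$ enters each from the left, and once $Y$ has reached the right endpoint of a block it never returns to it; by the strong Markov property and spatial homogeneity of $Y$, together with the fact that the portions of environment $(B_{\nu(n)}(\delta_j^*))_j$ are i.i.d.\ up to translation, the variables $\overline{T}_{\delta_j^*}$ are i.i.d.\ with the law of $\overline{T}_{\delta^*}$ conditioned on $\{\tau_{\delta^*}\ge g(n)\}$, and their number $\theta^*_{\lfloor tN\rfloor}=\theta_{\lfloor tN\rfloor}$ is $\sim tN\varphi(N)$, uniformly in $t$, by a functional version of $\mathcal{E}_1$ together with the slow variation of $g$ and $\varphi$. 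Since $g$, $\varphi$ and $\nu$ are slowly varying, I then freeze the scale at level $N$ in the definition of $\overline{T}$ and replace $\theta^*_{\lfloor tN\rfloor}$ by $\lfloor tN\varphi(N)\rfloor$ — the discrepancy being a sum of at most $CN\varphi(N)/\log N$ of these i.i.d.\ variables over a deterministic range of indices, hence $o(N^{1/\alpha})$ in probability by the Laplace transform estimate below. Thus it is enough to treat the rescaled partial-sum process $N^{-1/\alpha}\sum_{j=1}^{\lfloor tN\varphi(N)\rfloor}\overline{T}^{(j)}$ of the triangular array of i.i.d.\ copies $\overline{T}^{(j)}$ of $\overline{T}_{\delta^*(N)}$ given $\{\tau\ge g(N)\}$.

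To identify the limit, note that the increments of this process over disjoint ranges of $j$ are independent, so only the one-dimensional marginals matter. By Lemma \ref{l:laplacetransform} in its $\overline{T}$ form, with $\lambda_N=\lambda/N^{1/\alpha}$ and using $\frac{\alpha\pi}{\sin(\alpha\pi)}v_\varepsilon^{-\alpha}=(v_\varepsilon^\#)^{-1}$ and $\p(\tau\ge g(N))=\varphi(N)$, one has $\E[1-e^{-\lambda_N\overline{T}^{(1)}}]\sim \frac{(v_\varepsilon^\#)^{-1}\lambda^\alpha}{N\varphi(N)}$; therefore
\[
\E\Big[e^{-\lambda_N\sum_{j=1}^{\lfloor tN\varphi(N)\rfloor}\overline{T}^{(j)}}\Big]=\Big(1-\frac{(v_\varepsilon^\#)^{-1}\lambda^\alpha}{N\varphi(N)}\,(1+o(1))\Big)^{\lfloor tN\varphi(N)\rfloor}\ \longrightarrow\ e^{-t(v_\varepsilon^\#)^{-1}\lambda^\alpha},
\]
which is exactly the Laplace transform of $(v_\varepsilon^\#)^{-1/\alpha}V_\alpha(t)$. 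Together with the asymptotic independence of increments, a standard limit theorem for row-sums of triangular arrays of nonnegative i.i.d.\ variables yields convergence of all finite-dimensional distributions of $\widetilde H^{(N)}$, hence of $H^{(N)}$, to those of $(v_\varepsilon^\#)^{-1/\alpha}V_\alpha$.

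Finally I upgrade this to weak convergence in $D([0,T])$ with the $M_1$ topology. Each $H^{(N)}$ is nondecreasing with $H^{(N)}_0=0$, and $\{H^{(N)}_T\}$ is tight because its Laplace transform converges; since nondecreasing functions bounded by a fixed constant and vanishing at $0$ form a relatively $M_1$-compact set, $\{H^{(N)}\}$ is $M_1$-tight, and as $V_\alpha$ is almost surely continuous at each fixed time, finite-dimensional convergence along a dense set of times together with $M_1$-tightness gives the claim. This is also the point where $M_1$, rather than $J_1$, is the natural topology: the contribution of a deep trap to $H_m$ is accumulated as $m$ runs over a whole block of $\sim\nu(n)$ sites, so $H^{(N)}$ ramps up to its jumps instead of jumping instantaneously, but being monotone it still converges in $M_1$. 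I expect the main work to be the rigorous i.i.d./independence reduction for the $\overline{T}_{\delta_j^*}$ inside a triangular array whose block structure depends on $N$, and the uniformity over $t\in[0,T]$ of both the remainder bound coming from Lemma \ref{l:IA} and the replacement of $\theta^*_{\lfloor tN\rfloor}$ by its deterministic equivalent.
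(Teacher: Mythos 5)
Your overall route — discard the shallow traps via Lemma \ref{l:IA}, coarse-grain $H$ into the i.i.d.\ block occupation times $\overline T_{\delta_j^*}$, identify the limit through the Laplace asymptotics of Lemma \ref{l:laplacetransform}, and get $M_1$-tightness from monotonicity — is the same circle of ideas as the paper's proof (which organizes the finite-dimensional part instead as an iterated strong-Markov factorization of the joint Laplace transform of increments, bounding the random number of deep traps by the deterministic $\underline\theta_{N'}$ coming from $\mathcal{E}_1$ rather than replacing a random index by a deterministic one). However, one step of your reduction fails as stated: the claimed bound $\bigl|H_{\lfloor tN\rfloor}-\sum_{j\le\theta^*_{\lfloor tN\rfloor}}\overline T_{\delta_j^*}\bigr|\le C\sum_i\tau_{Y_i}{\bf e}_i{\bf 1}_{\{\tau_{Y_i}<g\}}=o(N^{1/\alpha})$ \emph{uniformly in} $t\in[0,T]$ is false. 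At a time $t$ with $\delta_j^*\le\lfloor tN\rfloor<\delta_j^*+\nu$, the sum already contains the full block time $\overline T_{\delta_j^*}$ (which is only collected up to $\zeta_{\delta_j^*+\nu}$), while $H_{\lfloor tN\rfloor}$ contains essentially none of it; since the deepest traps have $\tau$ of order $N^{1/\alpha}$, the supremum over $t$ of the discrepancy is of order $N^{1/\alpha}$ with probability bounded away from $0$, and it is certainly not dominated by the shallow-trap sum of Lemma \ref{l:IA}. (This is exactly the ``ramping'' phenomenon you invoke at the end to justify $M_1$ over $J_1$, so the uniform claim contradicts your own closing remark; with the opposite counting convention the same problem reappears in the other direction.) The damage is limited because your architecture only genuinely needs the comparison at the finitely many fixed times entering the finite-dimensional distributions — for a fixed $u$ the point $\lfloor uN\rfloor$ lies within $\nu$ of a deep trap with probability $O(\nu(N)\varphi(N))\to0$, so the fixed-time comparison holds with high probability — while tightness is argued for $H^{(N)}$ itself from monotonicity; but the step must be restated in that weaker (fixed-time or $M_1$) form, not as a uniform bound.

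Two further points need repair, though they do not change the outcome. First, $g$ and $\varphi$ are \emph{not} slowly varying ($g$ is regularly varying of index $1/\alpha$ and $\varphi$ of index $-1$), so ``freezing the scale at level $N$'' cannot be justified that way; it is harmless only because the product (number of deep traps below $\lfloor tN\rfloor$)$\,\times\,$(per-trap Laplace exponent from Lemma \ref{l:laplacetransform}) is asymptotically the same for any threshold level proportional to $N$ — or one can simply work at the single level $\lfloor TN\rfloor$ throughout and avoid the issue. Second, identifying the annealed law of $\overline T_{\delta_j^*}$ with the conditional law $\E^x[\,\cdot\mid\tau_x\ge g]$ of Lemma \ref{l:laplacetransform}, and ignoring backtracking below $\delta_j^*-\nu$, produces per-factor errors that are raised to the power $\theta\approx N\varphi(N)$; as in the paper's treatment of $\p(\delta_1^*\neq\delta_1)$ and of the reflection, one must check these corrections are $o((N\varphi(N))^{-1})$ per block. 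With these repairs your triangular-array formulation is a sound, essentially equivalent alternative to the paper's increment-by-increment Laplace transform argument.
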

The so called Skorokhod $M_1$-topology is not so common in the literature. Therefore, we refer to \cite{whitt} for detailed account on  $M_1$-topology.

\begin{proof}
Let $0=u_0< u_1< \dots < u_K\le T$ and $\beta_i>0$ for $i \in \{1,\dots , K \}.$ We will check the convergence of the finite-dimensional distributions of $H$ by proving the convergence of $ \E[\exp\{-\sum_{i=1}^K  \beta_i (H^{(N)}_{u_i}- H^{(N)}_{u_{i-1}} )\}].$

 Observe first that for any $u\in\z$ we have  $\p(\max_{y \in B_{\nu(TN)}(u)} \tau_y>g(TN))=o(1),$ when $N \to \infty.$ Then, this remark applied at $u':=\lfloor u_{K-1}N \rfloor-\nu(TN)$ with Lemma \ref{l:IA} yield
\begin{equation}
\label{eq:uknegli}
\p\Big(\sum_{i=0}^{\zeta_{\lfloor u_{K} N  \rfloor }} \tau_{Y_i} {\bf e}_i{\bf 1}_{\{{Y_i} \in B_{\nu(TN)}(u')\}} < C N^{1/\alpha} (\log N)^{-1}  \Big) \to 1, \qquad N \to \infty,
\end{equation}
which means that the time spent by $X$ in $B_{\nu(TN)}(u' )$ is negligible. Recalling that on $\mathcal{A}(TN)$ (whose probability tends to one by Lemma \ref{f:preliminaries-traj}) the process never backtracks more than $\nu(TN),$ we deduce from (\ref{eq:uknegli}) that
\begin{equation}
\label{eq:interhitnegli}
\p\Big( H_{\lfloor u_{K-1}N \rfloor}-H_{u'} < C N^{1/\alpha} (\log N)^{-1}  \Big) \to 1, \qquad N \to \infty.
\end{equation}
Hence, defining $H':= \beta_{K-1} N^{-{1 \over \alpha}} (H_{u'}-H_{ \lfloor u_{K-2}N \rfloor })$ we get 
 \begin{eqnarray} 
\nonumber && \E\left[\ee^{-\sum_{i=1}^{K}  \beta_i (H^{(N)}_{u_i}- H^{(N)}_{u_{i-1}} )} \right] 
\\
\nonumber 
 &=& \E\left[{\bf 1}_{ \mathcal{A}(TN)} \, \ee^{-\sum_{i=1}^{K-2}  \beta_i (H^{(N)}_{u_i}- H^{(N)}_{u_{i-1}} )-  H' } \ee^{- \beta_K (H^{(N)}_{u_{K}}-H^{(N)}_{ u_{K-1} } )}\right]  +o(1)
  \\
 &=& \E\left[\E_\tau \left[ \ee^{-\sum_{i=1}^{K-2}  \beta_i (H^{(N)}_{u_i}- H^{(N)}_{u_{i-1}} )- H' } \right] \e_{\tau,|u'}^{\lfloor u_{K-1}N \rfloor}\left[ \ee^{- \beta_K H^{(N)}_{u_{K}} }\right]\right]  +o(1),
  \label{eq:third}
 \end{eqnarray} 
 where $\e_{\tau,|y}^x$ denotes the law of the process in the
environment $\tau,$ starting at $x$ and reflected at site $y$. The last equality is a consequence of the strong Markov property applied at time $H_{\lfloor u_{K-1}N \rfloor}$ together with the fact that on $\mathcal{A}(TN)$ the process never backtracks more than $\nu(TN).$ Now, observe that the two quenched expectations in (\ref{eq:third}) depend on two disjoint portions of the environment: $(-\infty;  u')\cap \z$ and $[u' , \lfloor u_{K}N \rfloor ) \cap \z.$ Hence, since the $\tau_x$'s are i.i.d., these two quenched expectations are independent random variables and we obtain
 \begin{eqnarray} &&  \E\left[\ee^{-\sum_{i=1}^{K}  \beta_i (H^{(N)}_{u_i}- H^{(N)}_{u_{i-1}} )} \right] 
 \nonumber
 \\
& =&
 \nonumber
\E\left[\ee^{-\sum_{i=1}^{K-2}  \beta_i (H^{(N)}_{u_i}- H^{(N)}_{u_{i-1}} )-  H' } \right] \E\left[  \e_{\tau,|u'}^{\lfloor u_{K-1}N \rfloor} \left[ \ee^{- \beta_K H^{(N)}_{u_{K}} }\right]\right]  +o(1). \end{eqnarray}
Using again (\ref{eq:interhitnegli}) and Lemma \ref{f:preliminaries-traj} we have
 \begin{eqnarray}\nonumber   \E\left[\ee^{-\sum_{i=1}^{K}  \beta_i (H^{(N)}_{u_i}- H^{(N)}_{u_{i-1}} )} \right] 
=
\E\left[\ee^{-\sum_{i=1}^{K-1}  \beta_i (H^{(N)}_{u_i}- H^{(N)}_{u_{i-1}}) } \right] \E^{\lfloor u_{K-1}N \rfloor} \left[ \ee^{- \beta_K H^{(N)}_{u_{K}} }\right]\ +o(1). \end{eqnarray}
By the shift invariance of the environment, it is sufficient to prove that
  \begin{equation}
   \label{eq:onedim}
\e \left[  \ee^{- \beta_K  N^{-1/\alpha} H_{N' }}  \right]  \longrightarrow \exp\Big\{-  { \alpha \pi \over \sin( \alpha\pi)}  v_\varepsilon^{-\alpha} \beta_K^\alpha(u_K-u_{K-1} )\Big\}, \qquad N \to \infty,
 \end{equation}
where $N' := \lfloor u_{K} N \rfloor-\lfloor u_{K-1}N \rfloor \sim (u_K-u_{K-1} )N,$ when $N \to \infty.$ Indeed, iterating this procedure $K-2$ times will give the convergence of the finite-dimensional distributions.

Let us prove (\ref{eq:onedim}). Recalling Lemma \ref{l:preliminaries}, Lemma \ref{f:preliminaries-traj} and Lemma \ref{l:IA}, we obtain
\begin{eqnarray}
  \e \left[  \ee^{- \beta_K  N^{-1/\alpha} H_{N' }}  \right] &=&  \e \left[ {\bf 1}_{\mathcal{E}(N') \cap \mathcal{A}(N') \cap \mathcal{I}(N')}   \ee^{- \beta_K  N^{-1/\alpha} H_{N' }}  \right] +o(1)
  \nonumber
      \\
  &=&   \e \left[   \ee^{- \beta_K  N^{-1/\alpha} \sum_{i=1}^{\theta_{N'}}  T_{\delta_i(N')}  }  \right] +o(1)
    \nonumber
       \\
   &=&  \e \left[ {\bf 1}_{\mathcal{E}^*(N') }  \ee^{- \beta_K  N^{-1/\alpha} \sum_{i=1}^{\theta^*_{N'}}  T_{\delta_i^*(N')}  }  \right] +o(1),
\end{eqnarray}
where $T_x$ is defined in (\ref{eq:defTx}).
Furthermore, since on $\mathcal{E}^*(N') \cap \mathcal{A}(N')$ the process never backtracks before  ${\delta_i^*} -\nu(N')$ after hitting ${\delta_i^*}$ for $1\le i \le\theta^*_{N'},$ we get, by applying successively the strong Markov property at the stopping times $H_{\delta_{\theta_{N'}^*}}, \dots, H_{\delta_{1}^*},$
\begin{eqnarray}
  \e \left[  \ee^{- \beta_K  N^{-1/\alpha} H_{N' }}  \right] &=&  \e \bigg[ {\bf 1}_{\mathcal{E}^*(N')  \cap \mathcal{A}(N') } \prod_{j=1}^{\theta_{N'}^*}   \e_{\tau,|\delta_i^*-\nu}^{\delta_i^*} \left[ \ee^{- \beta_K  N^{-1/\alpha} T_{\delta_i^*}  }  \right]\bigg] +o(1)    \nonumber
      \\
  &\le&    \e \bigg[  \prod_{j=1}^{\underline \theta_{N'}}   \e_{\tau,|\delta_i^*-\nu}^{\delta_i^*} \left[ \ee^{- \beta_K  N^{-1/\alpha} T_{\delta_i^*}  }  \right]\bigg] +o(1),  \end{eqnarray}
where $\underline \theta_{N'}:= N' \varphi({N'}) \big(1-{ 1 \over \log N'}\big).$ Then, observing that the quenched expectations
$(\e_{\tau,|\delta_i^*-\nu}^{\delta_i^*} [ \ee^{- \beta_K  N^{-1/\alpha} T_{\delta_i^*}  }] , \,1 \le j \le \underline{\theta}_{N'} )$ are i.i.d.
random variables by construction of the $*$-deep traps and shift invariance of the environment, we obtain
\begin{eqnarray}
  \e \left[  \ee^{- \beta_K  N^{-1/\alpha} H_{N' }}  \right] \le \e \left[
    \e_{\tau,|\delta_1^*-\nu}^{\delta_1^*} \left[ \ee^{- \beta_K  N^{-1/\alpha} T_{\delta_1^*}  }  \right]\right]^{\underline{\theta}_{N'}}+o(1).
\end{eqnarray}
Since an easy computation yields that $\p(\delta_1^*\neq \delta_1)=\p(\max_{0\le y \le \nu(N')} \tau_y \ge g(N'))=o((N' \varphi({N'}))^{-1})$ and  $\p(H_{-\nu(N')}<H_{\nu(N')})=o((N' \varphi(N'))^{-1})$ when $N' \to \infty$ (or equivalently when $N \to \infty$), we get
\begin{eqnarray}
\label{eq:upbound}
  \e \left[  \ee^{- \beta_K  N^{-1/\alpha} H_{N' }}  \right] \le \E^x \Big[e^{-\beta_K  N^{-1/\alpha} {T_x}} \vert \tau_x \ge g(N') \Big] ^{\underline{\theta}_{N'}}+o(1).
\end{eqnarray}
Now, using Lemma \ref{l:laplacetransform}, this yields
\begin{eqnarray}
\limsup_{N \to \infty} \e \left[  \ee^{- \beta_K  N^{-1/\alpha} H_{N' }}  \right] \le \exp\Big\{-  { \alpha \pi \over \sin( \alpha\pi)} \, v_\varepsilon^{-\alpha} \beta_K^\alpha(u_K-u_{K-1} )\Big\}.
\end{eqnarray}
Moreover, we can similarly obtain  the same lower bound, which
implies (\ref{eq:onedim}) and concludes the proof of the convergence of the finite-dimensional distributions.

For the tightness, the arguments are exactly the same as in \cite{benarous-bovier-cerny}. We refer to section $5$ of \cite{benarous-bovier-cerny} for a detailed discussion.
\end{proof}
\medskip
{\it Proof of Theorem  \ref{t:scaling}.}

\noindent We use $(D(\left[0,T\right]),M_1)$ (resp. $(D(\left[0,T\right]),U)$) to denote the space $D(\left[0,T\right])$ equipped with the $M_1$ (resp. uniform) topology.
Let us introduce
\begin{equation}
\overline X_t^{(N)}:= \sup_{0\le s\le t} X_s^{(N)},  \qquad t \ge 0,
\end{equation}
which corresponds to the generalized inverse of the increasing process $H^{(N)}.$ Let $D^{\uparrow}$ denote the subset of $D(\left[0,T\right])$ consisting of unbounded increasing functions. By corollary 13.6.4 of \cite{whitt} the inverse map from $(D^{\uparrow},M_1)$ to $(D^{\uparrow},U)$ is continuous at strictly increasing functions. Since the $\alpha$-stable subordinator $V_{\alpha}$
(which appears in the limit of $H^{(N)}$ in $(D^{\uparrow},M_1)$) is almost surely strictly increasing (indeed, its L\'evy measure, denoted by $\Pi_\alpha,$ satisfies $\Pi_\alpha((0,\infty))=\infty$), the distribution of $\overline X^{(N)}$ converges to the distribution of $v_\varepsilon^{\#} V_\alpha^{-1}$ weakly on $(D^{\uparrow},U)$ and the limit is almost surely continuous.
 
 Now, Theorem \ref{t:scaling} will be a consequence of
 \begin{equation}
 \label{eq:XoverlineX}
\p \left(\sup \left\{ \vert X^{(N)}_t - \overline X^{(N)}_t \vert; \, 0 \le t \le T \right\} > \gamma \right)  \longrightarrow 0, \qquad N \to \infty,
\end{equation}
for any $\gamma>0.$ To prove (\ref{eq:XoverlineX}), recall first that Proposition \ref{p:scaling limit} implies that $\p(H_{N^\alpha\log N} >TN )\to 1,$ when $ N\to \infty,$ such that we only have to prove
 \begin{equation}
 \label{eq:XoverlineX++}
\p \left(\sup\{\vert X_t - \overline X_t \vert ; \, 0 \le t \le  H_{\lfloor N^\alpha\log N \rfloor}\} > \gamma N^\alpha \right)  \longrightarrow 0, \qquad N \to \infty.
\end{equation}
Furthermore, observe that  
 \begin{equation}
\sup\{\vert X_t - \overline X_t \vert ; \, 0 \le t \le  H_{\lfloor N^\alpha\log N \rfloor}\}=\max\{\vert Y_k - \overline Y_k \vert ; \, 0 \le k \le  \zeta_{\lfloor N^\alpha\log N \rfloor}\},
\end{equation}
by definition and that on $\mathcal{A}(\lfloor N^\alpha\log N \rfloor)$ (whose probability tends to $1$ when $N$ goes to infinity), this last quantity is less than $\nu(\lfloor N^\alpha\log N \rfloor)=o(N^{\alpha}),$ when $N \to \infty.$ This yields (\ref{eq:XoverlineX++}) and concludes the proof of Theorem  \ref{t:scaling}.
\qed

\section{Proof of Theorem \ref{t:aging} } \label{s:aging}
To bound the number of traps the random walk can cross before time
$t$ let us consider
  \begin{equation}
n_t:= \lfloor  t^\alpha \log \log t\rfloor ,
\end{equation}
 and observe that Theorem \ref{t:scaling} implies that $\p(\overline X_t \ge n_t) \to 0,$ $t \to \infty.$ Moreover, since we need more concentration properties for the random walk in the neighborhood of the $\delta_j$'s, we introduce  
  \begin{equation}
\overline \nu= \overline \nu(n_t):=  \lfloor  C' \log \log n_t \rfloor ,
\end{equation}
for some $C'$ large enough which will be chosen later. For convenience of notations we will use $\nu,$  $\overline \nu$ and $\delta_j$ in place of $\nu(n_t),$ $\overline \nu(n_t)$ and $\delta_j(n_t)$ throughout this section.
 
 Then, we define the sequence of random times $(T^*_j)_{j \ge 1}$ as follows: conditioning on $\tau,$ $(T^*_j)_{j \ge 1}$ is defined as an independent sequence of random variables with the law of $H_{\delta_j^*+\overline \nu}$ in the environment $\tau$ starting at site $\delta_j^*$ and reflected at $\delta_j^*- \nu.$ Hence, under the annealed law $\p,$ the $T^*_j$'s are are i.i.d. since the intervals $B_{\nu}(\delta_j^*)$ are made of independent and identically distributed portions of environment $\tau$ (by definition). Then, we give an analogous result to the extension of Dynkin's theorem proved in \cite{enriquez-sabot-zindy-3} (see Proposition 1 in \cite{enriquez-sabot-zindy-3}).
\begin{proposition}\label{p:dynkin}
For any $t > 0,$ let
$\l_t^*:=\sup\{j \ge 0 : \; T_1^*+\cdots +T_j^*\le t\}.$ Then, for all $0\le x_1<x_2 \le 1,$ we have
\begin{equation}
\lim_{t\to\infty} \p(t(1-x_2)\le T_1^*+\cdots +T_{\l_t^*}^*\le
t(1-x_1)) ={\sin(\alpha\pi)\over \pi} \int_{x_1}^{x_2}
{ (1-x)^{\alpha -1} x^{-\alpha}} \d x.
\end{equation}
For all $0\le x_1<x_2$, we have
\begin{equation}
\lim_{t\to\infty} \p(t(1+x_1)\le T_1^*+\cdots
+T_{\l_t^*+1}^*\le t(1+x_2)) ={\sin(\alpha\pi)\over \pi}
\int_{x_1}^{x_2} {\d x\over x^{\alpha} (1+x)}.
\end{equation}
\end{proposition}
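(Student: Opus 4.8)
The plan is to read Proposition \ref{p:dynkin} as a statement about the undershoot $T_1^*+\cdots+T_{\l_t^*}^*$ and the overshoot $T_1^*+\cdots+T_{\l_t^*+1}^*$ at level $t$ of the renewal sequence $S_k^*:=T_1^*+\cdots+T_k^*$, whose increments $(T_j^*)_{j\ge1}$ are --- as recalled just before the statement --- i.i.d.\ under the annealed law $\p$ and attracted to a one-sided $\alpha$-stable law. Once the domain-of-attraction normalisation of the common law of the $T_j^*$ is pinned down, the two displayed limits are precisely the generalized arcsine (Dynkin--Lamperti) laws for the current age $(t-S_{\l_t^*}^*)/t$ and the residual lifetime $(S_{\l_t^*+1}^*-t)/t$, and the argument then goes through as in the extension of Dynkin's theorem in \cite{enriquez-sabot-zindy-3} (Proposition 1 there); the only genuinely new input is the tail estimate for $T_1^*$.

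\emph{Step 1 (heavy tail of $T_1^*$).} I would first show that, on the high-probability events of Section \ref{s:prelim}, $T_1^*$ has the same Laplace asymptotics as the occupation time $\overline T_{\delta_1^*}(n_t)$ from Lemma \ref{l:laplacetransform}: on $\{\delta_1^*=\delta_1\}$ (which has probability $1-o(1)$ by Lemma \ref{l:*}) there is no deep trap in $(\delta_1^*,\delta_1^*+\overline\nu]$, so by the argument of Lemma \ref{l:IA} the time spent strictly away from $\delta_1^*$ before reaching $\delta_1^*+\overline\nu$ is $o(n_t^{1/\alpha})$, and on the analogue of $\mathcal A(n_t)$ the reflection at $\delta_1^*-\nu$ never operates. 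Invoking Lemma \ref{l:laplacetransform} with $n=n_t$ and using $\varphi(n_t)\sim g(n_t)^{-\alpha}$ together with $v^{\#}_\varepsilon=\frac{\sin(\alpha\pi)}{\alpha\pi}v_\varepsilon^{\alpha}$, this gives, for every fixed $\lambda>0$,
\begin{equation}
\E\!\left[1-e^{-\lambda\,n_t^{-1/\alpha}\,T_1^*}\right]\;\sim\;\frac{(v^{\#}_\varepsilon)^{-1}\,\lambda^{\alpha}}{n_t\,\varphi(n_t)},\qquad t\to\infty .
\end{equation}
Writing $m_t:=\lfloor n_t\varphi(n_t)\rfloor$ (the number of $*$-deep traps before $n_t$, up to Lemmas \ref{l:preliminaries}--\ref{l:*}), this means that $n_t^{-1/\alpha}S_{\lfloor s\,m_t\rfloor}^*$ converges to $(v^{\#}_\varepsilon)^{-1/\alpha}V_\alpha(s)$ and, after a Tauberian (Karamata) inversion, that $\p(T_1^*\ge u)\sim\frac{(v^{\#}_\varepsilon)^{-1}}{\Gamma(1-\alpha)\,\varphi(n_t)}\,u^{-\alpha}$, which is the heavy-tail input for the next step (with the uniformity in $t$ discussed below).

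\emph{Step 2 (from the tail to the arcsine laws).} Since $n_t=\lfloor t^\alpha\log\log t\rfloor$ forces $t/g(n_t)\to\infty$, the number $\l_t^*$ of increments needed to reach level $t$ tends to infinity (in particular $\p(\l_t^*=0)=\p(T_1^*>t)\to0$ by Step 1), so we are exactly in the regime where the generalized arcsine theorem applies: from the tail of $T_1^*$ obtained in Step 1, the age $(t-S_{\l_t^*}^*)/t$ converges weakly to a $\mathrm{Beta}(1-\alpha,\alpha)$ variable --- equivalently $S_{\l_t^*}^*/t$ has limiting density $\frac{\sin(\alpha\pi)}{\pi}\,y^{\alpha-1}(1-y)^{-\alpha}$ on $(0,1)$ --- and the residual $(S_{\l_t^*+1}^*-t)/t$ converges weakly to the law with density $\frac{\sin(\alpha\pi)}{\pi}\,x^{-\alpha}(1+x)^{-1}$ on $(0,\infty)$. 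The change of variables $y=1-x$ in the first limit and the substitution $x\mapsto 1+x$ in the second then yield the two displayed formulas. The passage from the stable domain of attraction to the arcsine laws for the undershoot and the overshoot I would carry out verbatim as in the proof of Proposition 1 of \cite{enriquez-sabot-zindy-3}.

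The main obstacle is not the arcsine computation, which is classical and identical to \cite{enriquez-sabot-zindy-3}, but the fact that the step law $T_1^*=T_1^*(n_t)$ genuinely depends on $t$ through the threshold $g(n_t)$: this is a triangular-array renewal problem, so the classical Dynkin--Lamperti statement cannot be quoted directly, and the real work is to establish the tail estimate of Step 1 with enough uniformity in $t$ for the extension of \cite{enriquez-sabot-zindy-3} to apply, while controlling --- by the (routine) estimates of Section \ref{s:prelim} --- the discrepancies between $T_1^*$, the block occupation time $\overline T_{\delta_1^*}(n_t)$, and the bare occupation time $T_{\delta_1^*}(n_t)$ of the first $*$-deep trap.
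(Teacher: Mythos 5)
Your proposal matches the paper's own proof: the paper likewise establishes the Laplace asymptotics of the step law (Remark \ref{r:laplacetransform+}, i.e.\ Lemma \ref{l:laplacetransform} adapted to the window $[x-\nu,x+\overline\nu]$, together with the observation that the reflection at $\delta_j^*-\nu$ is irrelevant since $\p^x(H_{x-\nu}<\infty)=O(r_\varepsilon^{\nu})=o((t^\alpha\varphi(n_t))^{-1})$), and then invokes the extension of Dynkin's theorem in Proposition 1 of \cite{enriquez-sabot-zindy-3}, noting exactly the triangular-array issue you flag (the law of $T_1^*$ depends on $t$ through $g(n_t)$). Your normalization by $n_t^{1/\alpha}$ instead of $t$ and the optional Tauberian inversion are cosmetic differences; the substance and the reliance on \cite{enriquez-sabot-zindy-3} are the same.
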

Before proving this result, let us first recall Lemma \ref{l:laplacetransform} and make the following observation, which is the main ingredient in the proof of Proposition \ref{p:dynkin}.
\begin{remark}
\label{r:laplacetransform+} If we consider
\begin{eqnarray}
T^*(x)=T^*(x,n_t)&:=&\sum_{0}^{\zeta_{x+\overline \nu(n_t)}} \tau_{Y_i} {\bf e}_i{\bf 1}_{\{{Y_i}\in \left[x-\nu(n_t), \,x+\overline \nu(n_t)\right]\}}, \qquad x \in \z,
\end{eqnarray}
then the same arguments as in the proof of Lemma \ref{l:laplacetransform} yield, for all $\lambda>0,$
 \begin{equation}
 \E^x \Big[1-e^{-\lambda {T^*(x) \over t}} \vert \tau_x \ge g(n_t) \Big] \sim
{\p(\tau_x \ge g(n_t))^{-1} \over t^\alpha} { \alpha \pi \over \sin( \alpha\pi)} \, v_\varepsilon^{-\alpha}
\, \lambda^\alpha, \qquad t \to \infty.
\end{equation}
\end{remark}

\bigskip

\begin{proof} Observe first that an easy computation yields that $\p^x(H_{x- \nu}<\infty)=O(r_\varepsilon^{ \nu}),$ when $t \to \infty$ (where we recall that $ r_\varepsilon= q_\varepsilon/ p_\varepsilon<1$).  Moreover, we have $r_\varepsilon^{ \nu(n_t)}=o((t^\alpha \varphi(n_t))^{-1}).$ Therefore, Remark \ref{r:laplacetransform+} yields
 \begin{equation}
 \E \Big[1- \ee^{-\lambda {T^*_1 \over t}} \Big] \sim
{\p(\tau_x \ge g(n_t))^{-1} \over t^\alpha} { \alpha \pi \over \sin( \alpha\pi)} \, v_\varepsilon^{-\alpha}
\, \lambda^\alpha, \qquad t \to \infty.
\end{equation}
Then, the arguments are exactly the same as in the proof of Proposition $1$ in \cite{enriquez-sabot-zindy-3}. Observe that this result would exactly be Dynkin's theorem (see Feller, vol. II, \cite{feller}, p. 472) if the sequence $(T^*_j)_{j \ge 1}$ was an independent sequence of random variables in the domain of attraction of a stable law of index $\alpha.$ Here, this sequence depends implicitly  on the time $t,$ since the $*$-deep traps are defined from the critical depth $g(n_t).$
\end{proof}

Recalling Lemma \ref{l:IA}, we will now prove that the results of Proposition \ref{p:dynkin} are still true if we consider, in addition, the inter-arrival times between deep traps. Before, let us define the notion of inter-arrival times between $x$ and $y$, for any $x,y \ge 0,$ by:
\begin{equation}
H(x,y):=\inf \{ t \ge 0: \;   X_{H_x+t}=y \}.
\end{equation}

\begin{proposition}\label{p:dynkin2}
For any $t > 0,$ let
$\l_t:=\sup\{j \ge 0 : \; H_{\delta_j}\le t\}.$ Then, we have
\begin{equation}
\lim_{t\to\infty} \p(H_{\delta_{\ell_t}}\le t <H_{\delta_{\ell_t}+\overline \nu}  )=1.
\end{equation}
For all $0\le x_1<x_2 \le 1,$ we have
\begin{equation}
\lim_{t\to\infty} \p(t(1-x_2)\le H_{\delta_{\ell_t}} \le
t(1-x_1)) ={\sin(\alpha\pi)\over \pi} \int_{x_1}^{x_2}
{ (1-x)^{\alpha -1} x^{-\alpha}} \d x.
\end{equation}
For all $0\le x_1<x_2$, we have
\begin{equation}
\lim_{t\to\infty} \p(t(1+x_1)\le H_{\delta_{\ell_t+1}}\le t(1+x_2)) ={\sin(\alpha\pi)\over \pi}
\int_{x_1}^{x_2} {\d x\over x^{\alpha} (1+x)}.
\end{equation}
\end{proposition}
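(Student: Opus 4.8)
The plan is to transfer the conclusions of Proposition \ref{p:dynkin}, which concern the idealized i.i.d.\ sequence $(T_j^*)_{j\ge1}$, to the genuine hitting times $H_{\delta_j}$ of the deep traps, by showing that the two clocks differ by a negligible amount. First I would work on the event $\mathcal{E}(n_t)\cap \mathcal{E}^*(n_t)\cap \mathcal{A}(n_t)\cap \mathcal{I}(n_t)$, which has probability tending to one by Lemmas \ref{l:preliminaries}, \ref{l:*}, \ref{f:preliminaries-traj} and \ref{l:IA} (applied with $n=n_t$, which is legitimate because $\overline X_t\le n_t$ with probability tending to one by Theorem \ref{t:scaling}). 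On this event the deep traps and the $*$-deep traps coincide, the walk never backtracks more than $\nu$, and hence after hitting $\delta_j^*$ the walk never returns below $\delta_j^*-\nu$; so the strong Markov property applied successively at the times $H_{\delta_1},\dots,H_{\delta_{\ell}}$ lets me write $H_{\delta_j+\overline\nu}-H_{\delta_j}$ as a sum of independent pieces distributed exactly like the $T_j^*$'s, up to the reflection at $\delta_j^*-\nu$ whose cost is the $O(r_\varepsilon^{\nu})$ term already controlled in the proof of Proposition \ref{p:dynkin}.

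The second ingredient is that the time spent away from the deep traps is negligible at scale $t$: by Lemma \ref{l:IA} (with $n=n_t$), $\sum_{i=0}^{\zeta_{n_t}}\tau_{Y_i}{\bf e}_i{\bf 1}_{\{\tau_{Y_i}<g(n_t)\}}$ is of order at most $n_t^{1/\alpha}/\log n_t = o(t)$, since $n_t^{1/\alpha}=(t^\alpha\log\log t)^{1/\alpha}$ divided by $\log n_t\sim \alpha\log t$ is indeed $o(t)$. Consequently $H_{\delta_{j}+\overline\nu}$ and $T_1^*+\cdots+T_j^*$ differ by a $o(t)$ term uniformly in $j\le \theta_{n_t}$, and the same is true for $H_{\delta_j}$ itself since the extra interval of length $\overline\nu$ around each trap contributes only shallow traps. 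Combining this with Proposition \ref{p:dynkin} gives immediately the second and third displayed limits: on the good event, $t(1-x_2)\le H_{\delta_{\ell_t}}\le t(1-x_1)$ differs from $t(1-x_2)\le T_1^*+\cdots+T_{\l_t^*}^*\le t(1-x_1)$ only by events of probability $o(1)$, after slightly enlarging the interval $[x_1,x_2]$ by an amount tending to zero and using continuity of the limiting arcsine density on $(0,1)$; similarly for the overshoot statement with the density $1/(x^\alpha(1+x))$.

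For the first assertion, $\p(H_{\delta_{\ell_t}}\le t<H_{\delta_{\ell_t}+\overline\nu})\to 1$, I would argue that $t$ cannot fall between $H_{\delta_{\ell_t}+\overline\nu}$ and $H_{\delta_{\ell_t+1}}$: on the good event that whole stretch corresponds to the walk moving from $\delta_{\ell_t}^*+\overline\nu$ to $\delta_{\ell_t+1}^*$ through traps all shallower than $g(n_t)$, whose total cumulative time is $o(t)$ by Lemma \ref{l:IA}, so it is swallowed by the statement that $T_1^*+\cdots+T_{\l_t^*}^*$ already lies within $o(t)$ of $t$; more precisely, one shows $\p(H_{\delta_{\ell_t}+\overline\nu}-H_{\delta_{\ell_t}}\ge \eta t)\to 0$ is false — rather $\p(H_{\delta_{\ell_t}+\overline\nu}>t)\to 1$ follows because $t-H_{\delta_{\ell_t}}$ is, up to $o(t)$, the overshoot $T_1^*+\cdots+T_{\l_t^*+1}^*-t$'s complement, which is bounded below by a nondegenerate quantity while the shallow-trap time between $\delta_{\ell_t}$ and $\delta_{\ell_t}+\overline\nu$ is $o(t)$.

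The main obstacle I expect is the bookkeeping needed to make the ``$o(t)$ error uniform in $j$'' statement rigorous: Lemma \ref{l:IA} controls $\sum_{i\le\zeta_{n_t}}$ of the shallow contribution in one shot, so the error is a single $o(t)$ quantity valid simultaneously for every partial sum, but one must check carefully that the reflections at the $\delta_j^*-\nu$ sites (present in the definition of $T_j^*$ but absent in the unreflected walk) and the replacement of $\delta_j^*$ by $\delta_j$ genuinely cost only $o(1)$ in probability and do not accumulate over the $\theta_{n_t}=O(t^\alpha\log\log t\,\cdot g(n_t)^{-\alpha})$ traps; this is handled by the union bound together with the estimates $\p^x(H_{x-\nu}<\infty)=O(r_\varepsilon^\nu)$ and $\p(\delta_1^*\ne\delta_1)=o((n_t\varphi(n_t))^{-1})$ already used above, but it requires that $n_t$ be chosen exactly so that $\theta_{n_t}\, r_\varepsilon^{\nu(n_t)}\to 0$, which holds since $\nu(n_t)=\lfloor(\log n_t)^{1+\gamma}\rfloor$ grows faster than $\log\theta_{n_t}$.
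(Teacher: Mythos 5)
Your overall route (transfer Proposition \ref{p:dynkin} to the hitting times by coupling $H(\delta_j,\delta_j+\overline\nu)$ with $T_j^*$ on the good events and absorbing the inter-trap times via Lemma \ref{l:IA}) is the same as the paper's, but there is a genuine gap at the point where you claim that $H_{\delta_{j+1}}$ differs from $T_1^*+\cdots+T_j^*$ only by shallow-trap time, ``since the extra interval of length $\overline\nu$ around each trap contributes only shallow traps.'' The quantity $T_j^*$ only records the occupation of the deep site $\delta_j$ up to the \emph{first} hitting of $\delta_j+\overline\nu$. After that time the walk may dip back down to $\delta_j$: this is a backtrack of size only $\overline\nu=\lfloor C'\log\log n_t\rfloor\ll\nu(n_t)$, so it is \emph{not} excluded by $\mathcal{A}(n_t)$, and the resulting extra exponential visits to $\delta_j$ concern a site with $\tau_{\delta_j}\ge g(n_t)$, hence are \emph{not} covered by Lemma \ref{l:IA}, which only bounds the cumulative time at sites of depth $<g(n_t)$. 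Such a re-entry can add a time comparable to $t$ (the relevant deep traps have depth of order $t$), so it cannot be swallowed into your uniform $o(t)$ error, and it also directly threatens the first assertion: if the walk returns to $\delta_{\ell_t}$ after $H_{\delta_{\ell_t}+\overline\nu}$, the time $t$ may well exceed $H_{\delta_{\ell_t}+\overline\nu}$ while the walk still sits in $\delta_{\ell_t}$. Your error analysis only controls returns at scale $\nu$ (the $O(r_\varepsilon^{\nu})$ reflection cost), which is the easy part; the dangerous event is at scale $\overline\nu$.

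This is exactly what the paper's Lemma \ref{l:DT+} supplies: on $\mathcal{B}(n_t)$, after reaching $\delta_j+\overline\nu$ the walk hits $\delta_j+\nu$ before returning to $\delta_j$ (and then $\mathcal{A}(n_t)$ forbids any later return), and the union bound over the $\theta_{n_t}\le C(\log n_t)^{2\alpha/(1-\alpha)}$ deep traps works only because $\p(\zeta_{-\overline\nu}<\infty)\le C r_\varepsilon^{\overline\nu}=C(\log n_t)^{-C'\log(1/r_\varepsilon)}$ with the constant $C'$ \emph{chosen larger than} $-\tfrac{2\alpha}{(1-\alpha)\log r_\varepsilon}$. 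Your proposal never uses the freedom in $C'$, and without such a choice the union bound at scale $\overline\nu$ does not close. Once this lemma is added, your argument for the two arcsine limits goes through essentially as in the paper (identify $\ell_t$ with the counting variable built from the sums of $H(\delta_j,\delta_j+\overline\nu)$, up to a unit shift, and use that the partial sums avoid any $\xi t$-neighbourhood of $t$ with probability tending to $1$ as $\xi\to0$); your sketch of the first assertion should also be rewritten along these lines, since as stated it is circular — it presupposes the very control of $H_{\delta_{\ell_t}+\overline\nu}-H_{\delta_{\ell_t}}$ versus $t-H_{\delta_{\ell_t}}$ that Lemma \ref{l:DT+} and Proposition \ref{p:dynkin} together provide.
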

\begin{proof}
We first need to prove that after hitting $\delta_j+\overline \nu,$ the particle does not backtrack more than $\overline \nu.$ We detail this result with the following lemma.
\begin{lemma}
\label{l:DT+} Let us define $\mathcal{B}(n_t):=\mathcal{A}(n_t) \cap \bigcap_{j=1}^{\theta_{n_t}}  \{  H(\delta_j+\overline \nu,\delta_j+ \nu) < H(\delta_j+\overline \nu,\delta_j)   \}. $ Then, we have
  \begin{equation}
\lim_{t \to \infty}\p\left(\mathcal{B}(n_t)\right)=1.
\end{equation}
\end{lemma}
\begin{proof}
Since Lemma \ref{f:preliminaries-traj} says that $\p\left(\mathcal{A}(n_t)\right)$ tends to one, we only have to prove that 
  \begin{equation}
\lim_{t \to \infty}\p\Big( \bigcup_{j=1}^{\theta_{n_t}}  \{  H(\delta_j+\overline \nu,\delta_j+ \nu) > H(\delta_j+\overline \nu,\delta_j)   \}\Big)=0.
\end{equation}
Recalling that on $\mathcal{E}(n_t)\cap\mathcal{E}^*(n_t),$ whose probability tends to 1 when $t$ tends to infinity (by Lemma \ref{l:preliminaries} and Lemma \ref{l:*}), the number $\theta_{n_t}$ of deep traps (i.e. deeper than $g(n_t)$) is bounded by $C (\log n_t)^{2 \alpha \over 1-\alpha},$ it is sufficient to prove that
  \begin{equation}
  \label{eq:limsum}
\lim_{t \to \infty} \left( \sum_{1\le j\le C (\log n_t)^{2 \alpha \over 1-\alpha}} \p \big( H(\delta_j+\overline \nu,\delta_j+ \nu) > H(\delta_j+\overline \nu,\delta_j) \big) \right)=0.
\end{equation}
Now, the strong Markov property applied at $H(\delta_j+\overline \nu)$ implies that the probability term in (\ref{eq:limsum}) is bounded by $\p(\zeta_{-\overline \nu} <\infty),$ which does not depend on $j.$  Therefore, (\ref{eq:limsum}) will be a consequence of 
  \begin{equation}
\p(\zeta_{-\overline \nu} <\infty)=o( (\log n_t)^{-{2 \alpha \over 1-\alpha}}), \qquad t \to \infty.
\end{equation}
Recalling that we have  $\p(\zeta_{-\overline \nu} <\infty) \le C r_\varepsilon^{\overline \nu}$  (where $ r_\varepsilon= q_\varepsilon/ p_\varepsilon<1$), we conclude the proof of Lemma \ref{l:DT+} by choosing $C'$ larger than $-2\alpha/(1-\alpha) \log r_\varepsilon$ (recall that $\overline \nu= \overline \nu(n_t)=  \lfloor C' \log \log n_t \rfloor$).
\end{proof}

Let us introduce $\mathcal{C}(n_t):=\{\overline {X}_t \le n_t\},$ whose probability tends to one (recall Theorem \ref{t:scaling}). Now, to prove Proposition \ref{p:dynkin2}, observe that
on $\mathcal{E}^*(n_t)\cap \mathcal{A}(n_t),$  the random
times $(H(\delta_j,\delta_j+ \overline \nu))_{1\le j \le \theta^*_{n_t}}$ have the same law as the random
times $(T^*_j)_{1\le j \le \theta^*_{n_t}}$ defined previously. If we define $\tilde \l_t :=\sup\{j \ge 0: \;
H(\delta_1,\delta_1+ \overline \nu)+\cdots +H(\delta_j,\delta_j+ \overline \nu) \le t\}$, then, using Proposition
\ref{p:dynkin}, Lemma \ref{l:*} and Lemma \ref{f:preliminaries-traj}, we get that the result of
Proposition \ref{p:dynkin} is true with $(H(\delta_j,\delta_j+ \overline \nu))_{1\le j \le \theta^*_{n_t}}$ and $\tilde \l_t$ in
place of $(T^*_j)_{1\le j \le \theta^*_{n_t}}$ and $\l^*_t$. Now, recalling Lemma \ref{l:IA} and since $n_t^{1/\alpha}/ \log n_t=o(t),$ when $t \to \infty,$ we obtain
\begin{eqnarray*}
&&\liminf_{t\to \infty} \p ( \tilde \l_t=\l_t -1\, ;\, H_{\delta_{\ell_t}}\le t <H_{\delta_{\ell_t}+\overline \nu} )
\\
&\ge& \liminf_{t\to\infty} \p( \mathcal{I}(n_t)\, ;\, \mathcal{B}(n_t)\, ;\, \mathcal{C}(n_t) \, ;\, \vert t- (H(\delta_1,\delta_1+ \overline \nu)+\cdots
+H(\delta_{\tilde \l_t},\delta_{\tilde \l_t}+ \overline \nu))\vert \ge \xi t ),
\end{eqnarray*}
for all $\xi>0.$ Thus, using Lemma \ref{l:IA}, Lemma \ref{l:DT+}, Proposition \ref{p:dynkin} (for
$\tilde \l_t$ and $(H(\delta_j,\delta_j+ \overline \nu))_{1\le j \le \theta^*_{n_t}})$ and letting $\xi$ tends to $0,$ we
get that
\begin{equation}
\lim_{t\to \infty} \p (\tilde \l_t=\l_t -1\, ;\, H_{\delta_{\ell_t}}\le t <H_{\delta_{\ell_t}+\overline \nu})=1.
\end{equation}
We conclude the proof by the same type of arguments.
\end{proof}

To complete the proof of Theorem \ref{t:aging}, we will prove the following {\it localization} result, which means that the particle is in the last visited deep trap with an overwhelming probability.
\begin{proposition}\label{p:localization}
We have
\begin{equation}
\lim_{t\to\infty} \p(X_t=\delta_{\ell_t})=1.
\end{equation}
\end{proposition}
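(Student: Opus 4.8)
The plan is to show that at time $t$ the particle sits at $\delta_{\ell_t}$ by combining three facts already at our disposal: (i) by Proposition \ref{p:dynkin2}, with probability tending to one we have $H_{\delta_{\ell_t}}\le t < H_{\delta_{\ell_t}+\overline\nu}$, so $X$ has entered the block $B_{\nu}(\delta_{\ell_t})$ through $\delta_{\ell_t}$ but has not yet exited it on the right through $\delta_{\ell_t}+\overline\nu$; (ii) on $\mathcal{B}(n_t)$ (Lemma \ref{l:DT+}) the particle, once it has reached $\delta_{\ell_t}+\overline\nu$, would not backtrack to $\delta_{\ell_t}$, and more to the point, while inside the block it does not exit on the left past $\delta_{\ell_t}-\nu$; (iii) the time spent in $B_{\nu}(\delta_{\ell_t})\setminus\{\delta_{\ell_t}\}$ — i.e. on the shallow sites neighbouring the deep trap — is negligible compared to $t$. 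Granting these, at a ``typical'' time in the interval $[H_{\delta_{\ell_t}},H_{\delta_{\ell_t}+\overline\nu})$ the particle is at $\delta_{\ell_t}$, and one must rule out that $t$ falls into the exceptional negligible portion.

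First I would reduce to the event $\mathcal{G}(n_t):=\mathcal{A}(n_t)\cap\mathcal{B}(n_t)\cap\mathcal{C}(n_t)\cap\mathcal{E}(n_t)\cap\mathcal{E}^*(n_t)\cap\{H_{\delta_{\ell_t}}\le t<H_{\delta_{\ell_t}+\overline\nu}\}$, whose probability tends to $1$ by the lemmas and propositions cited. On this event, between $H_{\delta_{\ell_t}}$ and $H_{\delta_{\ell_t}+\overline\nu}$ the walk $Y$ moves inside $[\delta_{\ell_t}-\nu,\,\delta_{\ell_t}+\overline\nu]$, and its occupation time decomposes as $T_{\delta_{\ell_t}}+\tilde T$, where $T_{\delta_{\ell_t}}$ is the time spent at the deep site and $\tilde T$ is the time spent on the finitely many shallow sites of the block. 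The key quantitative input is that $\e[\tilde T]$ is of order $\overline\nu\, g(n_t)^{1-\alpha}$ (by the same Green-function estimate used in the proofs of Lemma \ref{l:IA} and Lemma \ref{l:laplacetransform} for $\tilde T_x$), which is $o(t)$ since $\overline\nu$ is only polylogarithmic and $g(n_t)^{1-\alpha}=n_t^{(1-\alpha)/\alpha}(\log n_t)^{-2}\ll t$. So by Markov's inequality $\p(\tilde T\ge \xi t)\to 0$ for every $\xi>0$.

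Next, conditionally on $\tau$ and on the whole excursion structure, the times at which $X$ is \emph{not} at $\delta_{\ell_t}$ during $[H_{\delta_{\ell_t}},H_{\delta_{\ell_t}+\overline\nu})$ form a set of total Lebesgue length $\tilde T$, which is $o(t)$ on a high-probability event. Since $t-H_{\delta_{\ell_t}}$ is of order $t$ with high probability — indeed, by Proposition \ref{p:dynkin2}, $H_{\delta_{\ell_t}}/t$ converges in law to a nondegenerate variable on $(0,1)$, so $t-H_{\delta_{\ell_t}}$ is not $o(t)$ — the conditional probability, given everything, that $t$ lands in the ``bad'' subset of length $\tilde T$ is at most $\tilde T/(t-H_{\delta_{\ell_t}})$, which converges to $0$ in probability. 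The cleanest way to make this rigorous is: write $\p(X_t\ne\delta_{\ell_t})\le \p(\mathcal{G}(n_t)^c)+\e[\,{\bf 1}_{\mathcal{G}(n_t)}\,\p(X_t\ne\delta_{\ell_t}\mid \mathcal{F})\,]$ where $\mathcal{F}$ is the $\sigma$-field generated by $\tau$, the embedded walk $Y$, and all the exponential clocks \emph{except} their effect is averaged only over the uniform position of $t$ within the relevant holding interval — more simply, condition on $\tau$ and on $Y$ and use that given these the process is a time-change by independent exponentials, so the quenched probability of being off $\delta_{\ell_t}$ at the deterministic time $t$ is controlled by the ratio of expected shallow-time to the length of the window.

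The main obstacle is precisely this last measurability/conditioning step: turning ``the bad set has small length'' into ``$t$ avoids the bad set with high probability'' requires that $t$ be, in an appropriate conditional sense, roughly uniformly spread over a window of length comparable to $t$, and one must be careful that $\ell_t$, $\delta_{\ell_t}$ and the window endpoints are themselves random and correlated with the environment. I would handle this by first freezing the environment $\tau$ and the embedded trajectory $(Y_k)$ up to $\zeta_{n_t}$ (which determines the $\delta_j$'s and hence, together with the clocks, $\ell_t$), and only then using the independence and memorylessness of the exponential holding times to estimate, uniformly over the frozen data in $\mathcal{G}(n_t)$, the quenched probability $\p_\tau(X_t\ne\delta_{\ell_t})$; a renewal/last-exit decomposition of the occupation measure of $\delta_{\ell_t}$ together with the bound $\e_\tau[\tilde T]=o(t)$ then closes the argument. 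Combining this with Proposition \ref{p:dynkin2}, which identifies $H_{\delta_{\ell_t}}/t$ in the limit and in particular keeps $t-H_{\delta_{\ell_t}}$ of order $t$, yields $\p(X_t=\delta_{\ell_t})\to 1$.
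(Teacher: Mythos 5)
Your reduction to a good event and your estimate that the expected time spent on the shallow sites of the block is $O(\nu\, g(n_t)^{1-\alpha})=o(t)$ are fine, but the heart of the proposition is exactly the step you yourself flag as ``the main obstacle'', and it is not carried out. The inequality you write --- that the conditional probability that $t$ lands in the bad set is at most $\tilde T/(t-H_{\delta_{\ell_t}})$ --- is not valid: $t$ is a fixed deterministic time, not a point sampled uniformly from the window $[H_{\delta_{\ell_t}},H_{\delta_{\ell_t}+\overline \nu})$, so ``given everything'' this conditional probability is $0$ or $1$, and a bound on the Lebesgue measure of the set of times at which $X\ne\delta_{\ell_t}$ gives, by itself, no control on $\p(X_t\ne\delta_{\ell_t})$: Markov's inequality controls time averages, not the law of the position at a fixed time. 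Your proposed repair (freeze $\tau$ and $Y$, then invoke memorylessness and a renewal/last-exit decomposition) is only gestured at; making it rigorous would require proving some near-uniformity in time of the quenched law over the window, and this is precisely the missing ingredient. There is the further entanglement that $\ell_t$, the window endpoints and the clocks are mutually dependent, which your conditioning scheme does not disentangle.

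The paper closes this gap with a different and much simpler device: for each deep trap $\delta_j$ it considers the process reflected at $\delta_j-\nu$ and $\delta_j+\overline\nu$, whose reversible measure $\mu_j$ (normalized by $\mu_j(\delta_j)=1$) satisfies, by reversibility, $\p^{\delta_j}_{\tau,|\delta_j-\nu,\delta_j+\overline\nu |}(X_s=x)\le\mu_j(x)$ \emph{uniformly in} $s\ge 0$. On an additional high-probability event $\mathcal{D}(n_t)$, requiring that all sites of the blocks other than the $\delta_j$'s have depth at most $(\log n_t)^{\beta}$, this yields $\sup_{s\ge0}\p^{\delta_j}_{\tau,|\delta_j-\nu,\delta_j+\overline\nu |}(X_s\ne\delta_j)\le C n_t^{-1/(2\alpha)}$; because the bound is uniform in $s$, the randomness of $t-H_{\delta_{\ell_t}}$ and of $\ell_t$ is harmless --- one applies the strong Markov property at $H_{\delta_j}$ and a union bound over the at most $C(\log n_t)^{2\alpha/(1-\alpha)}$ deep traps, while the event $\{\ell_t=\ell_{(1+\xi)t}\}$ together with Proposition \ref{p:dynkin2} supplies both the confinement to the block up to time $t$ and, letting $\xi\to0$, the conclusion. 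To salvage your route you would in effect have to establish a quenched bound of this uniform-in-time type, at which point you would be reproducing the paper's argument.
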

\begin{proof}
Now, for any deep trap $\delta_j$, let us denote by $\mu_j$ the invariant measure associated with the trap model on $\left[ \delta_j-\nu, \delta_j+\overline \nu \right]$ reflected at sites $\delta_j-\nu$ and $\delta_j+\overline \nu$ and normalized such that $\mu_j(\delta_j)=1.$ Clearly, $\mu_j$ is the reversible measure given by
\begin{eqnarray}
\label{eq:invariant}
\mu_j(x)=r_{\varepsilon}^{\delta-x} {\tau_x  \over \tau_{\delta_j}}, \qquad x\in (\delta_j-\nu; \delta_j+\overline \nu) \cap \z.
\end{eqnarray}
Since the process is reflected at sites $\delta_j-\nu$ and $\delta_j+\overline \nu,$ we have $\mu_j(\delta_j-\nu) \le \tau_{\delta_j-\nu} /\tau_{\delta_j}$ and $\mu_j(\delta_j-\nu) \le r_{\varepsilon}^{\overline \nu} \tau_{\delta_j+\overline \nu} / \tau_{\delta_j}.$ Moreover, since $\mu_j$ is an invariant measure and since $\mu_j(\delta_j)=1$, we have, for any $x \in\left[\delta_j-\nu,\delta_j+\overline\nu\right]$ and all $s \ge 0,$
\begin{equation}
\label{eq:reflected}
\p^{\delta_j}_{\tau,|\delta_j-\nu,\delta_j+\overline\nu |}(X_s=x)\le{\mu_j}(x).
\end{equation}

Furthermore, let us introduce the event
\begin{equation}
\mathcal {D}(n_t) := \bigcap_{j=1}^{\theta_{n_t}}\Big\{ \max_{x \in B_{\nu}(\delta_j)\setminus\{\delta_j\} } \tau_x <  (\log n_t)^\beta  \Big\},
\end{equation}
with $\beta > {1 \over \alpha} ({2\alpha \over 1-\alpha } +1+\gamma).$ Observe that the probability of $\mathcal {D}(n_t)$ tends to one, when $t$ tends to infinity. Indeed, since the number of deep traps is less than $C (\log n_t)^{2 \alpha \over 1-\alpha},$ and recalling that the number of sites contained in the $B_{\nu}(\delta_j)$'s is less than $2 \nu$ (with $\nu=\nu(n_t)= \lfloor (\log n_t)^{1+\gamma} \rfloor $), this fact is just a consequence of (\ref{ass:stable}). Recalling (\ref{eq:invariant}), observe that on $\mathcal {D}(n_t)$ we have
\begin{equation}
\label{eq:invmajo}
\sup_{x \in \left[\delta_j-\nu,\delta_j+\overline\nu\right]\setminus\{\delta_j\} } {\mu_j}(x) \le C r_{\varepsilon}^{\overline \nu} {(\log n_t)^{\beta+{2\over 1-\alpha}} \,  n_t^{-{1  \over  \alpha}}} \le C n_t^{-{1  \over 2\alpha}},
\end{equation}
for any $1\le j \le \theta_{n_t}.$ Hence, combining (\ref{eq:reflected}) and (\ref{eq:invmajo}), we obtain on $\mathcal{D}(n_t)$
\begin{equation}
\label{eq:supprobmajo}
 \p^{\delta_j}_{\tau,|\delta_j-\nu,\delta_j+\overline\nu |}(X_s\neq \delta_j )\le C n_t^{-{1  \over 2\alpha}}, \qquad \forall \, s \ge 0.
\end{equation}

Now, we fix $0<\xi<1.$ Then, let us write that $\liminf_{t\to\infty} \p(X_t=\delta_{\ell_t})$ is larger than
\begin{eqnarray}
\nonumber
&& \liminf_{t\to\infty} \p(X_t=\delta_{\ell_t} \, ;\, \ell_{t}= \ell_{(1+\xi)t})
\\
&\ge&  \liminf_{t\to\infty} \p(\ell_{t}= \ell_{(1+\xi)t})- \limsup_{t\to\infty} \p(X_t \neq \delta_{\ell_t} \, ;\, \ell_{t}= \ell_{(1+\xi)t}).
\label{eq:conclu1}
\end{eqnarray}
Considering the first probability term in (\ref{eq:conclu1}), we get using Proposition
\ref{p:dynkin2} that it is equal to
\begin{eqnarray}
\liminf_{t\to\infty} \p (H_{\delta_{\ell_t+1}}>(1+\xi)t)
= {\sin(\alpha \pi)\over \pi} \int_{\xi}^\infty
  {\d x\over x^{\alpha}
(1+x)}. \label{eq:conclu2}
\end{eqnarray}
In order to estimate the second probability term in (\ref{eq:conclu1}), let us introduce the event
\begin{equation*}
\mathcal {F}(n_t) :=  \mathcal {B}(n_t) \cap   \mathcal {C}(n_t) \cap  \mathcal {D}(n_t) \cap  \mathcal {E}(n_t) \cap  \mathcal {E}^*(n_t) \cap   \mathcal {I}(n_t) \cap \left\{ H_{\delta_{\ell_t}}\le t <H_{\delta_{\ell_t}+\overline \nu}  \right\}.
\end{equation*}
Observe
that the preliminary results obtained in Section
\ref{s:prelim} together with Theorem \ref{t:scaling}, Proposition \ref{p:dynkin2} and Lemma \ref{l:DT+}
imply that $\p(\mathcal {F}(n_t) )\to 1,$ when $t \to \infty.$ Then, we have that $ \limsup_{t\to\infty} \p (X_{t}\neq \delta_{\ell_t}\, ;\, \l_{t}=\l_{t(1+\xi)})$ is less than
\begin{eqnarray}
&& \limsup_{t\to\infty} \p(\mathcal {F}(n_t)\,; \, X_{t}\neq \delta_{\ell_t}\, ;\, \l_{t}=\l_{t(1+\xi)})
 \\
&\le & \limsup_{t\to\infty} \e\Big[\indic_{\mathcal {F}(n_t)} \sum_{j=1}^{\theta_{n_t}}
\indic_{ \{X_{t}\neq \delta_{\ell_t}\, ;\, \l_{t}=\l_{t(1+\xi)}=j\}}\Big].
\nonumber
\end{eqnarray}
But on the event $\mathcal {F}(n_t) \cap \{\l_{t}=\l_{t(1+\xi)}=j\}$ we know that
for all $s \in [H_{\delta_j}, t]$ the walk $X_s$ is in the interval
$\left[ \delta_j-\nu, \delta_j+\overline \nu \right].$ Indeed, on the event $\mathcal {B}(n_t) \cap \mathcal {C}(n_t)\cap \mathcal {I}(n_t)$ we know that once the position $ \delta_j+\overline \nu $ is reached
then within a time $n_t^{1/\alpha}/ \log n_t=o(t),$ when $t \to \infty,$ the position $\delta_{j+1}$ is reached,
which would contradict the fact that $\l_{t(1+\xi)}=j$. Hence, we
obtain, for all $j\in \N,$
\begin{eqnarray}
&&\p\left(\mathcal {F}(n_t)\,;\, j \le \theta_{n_t}\,;\, X_{t}\neq \delta_{\ell_t}\, ;\, \l_{t}=\l_{t(1+\xi)}=j \right)
\\
& \le & \E\Big[ \indic_{\{j \le \theta_{n_t}\}}\indic_{\mathcal{D}(n_t)\cap \mathcal{E}(n_t)}
 \sup_{s\in [0,t]} \p^{\delta_j}_{\tau,|\delta_j-\nu,\delta_j+\overline\nu |}(X_s\neq \delta_{j}) \Big]
 \le C n_t^{-{1  \over 2\alpha}},
\nonumber
\end{eqnarray}
where we used  (\ref{eq:supprobmajo}) on the event $\mathcal{D}(n_t).$ Considering now that, on the event $\mathcal{E}(n_t),$ the number $\theta_{n_t}$
of deep traps is smaller than $C (\log n_t)^{{2 \alpha \over 1-\alpha}}$ we
get that
\begin{eqnarray}
\label{eq:conclu3}
 \limsup_{t\to\infty} \p (X_{t}\neq \delta_{\ell_t}\, ;\, \l_{t}=\l_{t(1+\xi)})=0.
\end{eqnarray}
Then, assembling (\ref{eq:conclu1}), (\ref{eq:conclu2}), (\ref{eq:conclu3}) and letting $\xi$ tends to $0$ in (\ref{eq:conclu2}) concludes the proof of Proposition \ref{p:localization}.
\end{proof}

\medskip
{\it Proof of Theorem \ref{t:aging}.}
let us fix $h>1$ and introduce the event
\begin{eqnarray}
\mathcal {G}(t,h) :=  \{X_{t}= \delta_{\ell_t}\} \cap
\{X_{th}=\delta_{\ell_{th}}\},
\end{eqnarray}
whose probability tends to 1, when $t$ tends to infinity (it is a
consequence of Proposition \ref{p:localization}). Then, we easily have $\{X_{th}=X_t\} \cap\mathcal {G}(t,h)=\{ \l_{th}=\l_t\} \cap  \mathcal {G}(t,h).$
Therefore, since Proposition \ref{p:dynkin2} implies
that $\lim_{t\to\infty} \p (\l_{th}=\l_t)$ exists, we obtain
\begin{eqnarray}
\lim_{t\to\infty} \p(X_{th} =X_t) &=
& \lim_{t\to\infty} \p (\l_{th}=\l_t)= \lim_{t\to\infty} \p( T_{\l_t+1}\ge th)
\\
&=&  {\sin(\alpha\pi)\over \pi} \int_{0}^{1/h} y^{\kappa-1} (1-y)^{-\kappa}\d y,
\nonumber
\end{eqnarray}
which concludes the proof of Theorem \ref{t:aging}.
\qed

\bigskip
\bigskip
\noindent {\bf Acknowledgements}
 Many thanks are due to an anonymous referee for
careful reading of the original manuscript and for invaluable
suggestions.
\bigskip

\end{document}